\theoremstyle{plain}
\newtheorem{thm}{Theorem}[section] 
\newtheorem{lem}[thm]{Lemma} 
\newtheorem{prop}[thm]{Proposition} 
\newtheorem{cor}{Corollary}
\newtheorem*{conj*}{Conjecture}
\theoremstyle{definition}
\newtheorem{defn}{Definition}[section]
\newtheorem{eg}{Example}[section]
\theoremstyle{remark}
\DeclareMathOperator{\SU}{SU}
\DeclareMathOperator{\SL}{SL}
\DeclareMathOperator{\GL}{GL}
\DeclareMathOperator{\Orth}{O}
\DeclareMathOperator{\SO}{SO}
\DeclareMathOperator{\Spin}{Spin}
\DeclareMathOperator{\Pin}{Pin}
\newcommand{\R}{\mathbb{R}}
\newcommand{\N}{\mathbb{N}}
\newcommand{\C}{\mathbb{C}}
\newcommand{\Span}[1]{\left<{#1}\right>}
\newcommand{\git}{\mathbin{
		\mathchoice{\mkern-3mu /\mkern-6mu/ \mkern-3mu}
		{\mkern-3mu /\mkern-6mu/ \mkern-3mu}
		{/\mkern-5mu/}
		{/\mkern-5mu/}}}
\newcommand{\tikzcircle}[2][fill=red]{\tikz[baseline=-0.5ex]\draw[#1,radius=#2] (0,0) circle ;}%
\newcommand{\DLI}{\tikzcircle[fill=SkyBlue]{3pt}}
\newcommand{\DLII}{\tikzcircle[fill=BurntOrange]{3pt}}
\newcommand{\DLIII}{\tikzcircle[fill=Yellow]{3pt}}
\newcommand{\DLIV}{\tikzcircle[fill=YellowGreen]{3pt}}
\newcommand{\DLV}{\tikzcircle[fill=Salmon]{3pt}}
\newcommand{\DLVI}{\tikzcircle[fill=OliveGreen]{3pt}}
\newcommand{\DLVII}{\tikzcircle[fill=BlueViolet]{3pt}}
\newcommand{\DLVIII}{\tikzcircle[fill=OrangeRed]{3pt}}
\newcommand{\DLIX}{\tikzcircle[fill=Plum]{3pt}}
\newcommand{\DLX}{\tikzcircle[fill=Gray]{3pt}}
\newcommand{\ep}{\varepsilon}
\newcommand{\rhoN}{\rho_{\text{Nat}}}
\newcommand{\Spec}{\mathrm{Spec}}
\newcommand{\GHilb}{G\text{-Hilb}}
\newcommand{\TheAuthor}{Jon Cheah}
\newcommand{\TheTitle}{Real McKay Correspondence: KR-Theory of Graded Kleinian Groups}
\newcommand{\TheModule}{\bf MA4K9 Dissertation}
\newcommand{\TheUni}{The University of Warwick}
\newcommand{\TheDept}{Mathematics Institute}
\newcommand{\TheSubDate}{\monthyear \formatdate{5}{4}{2022}}
\begin{document}

\pagenumbering{roman}
\begin{titlepage}
\begin{center}
\includegraphics[width=5cm]{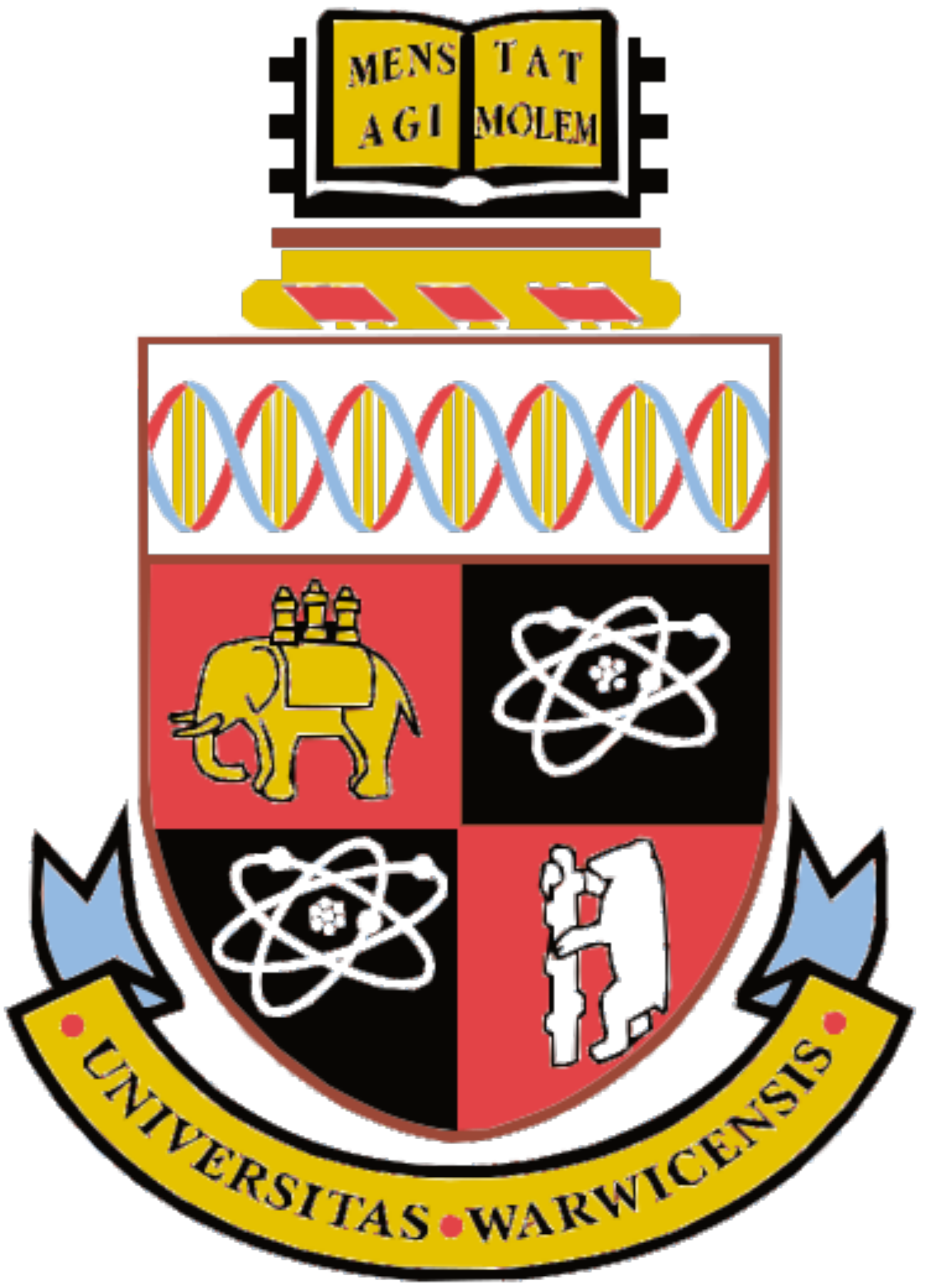} 

\vspace*{20pt}
\begin{spacing}{1.5}
\begin{center}
{\Large \bf \TheTitle} 


by

{\Large \bf \TheAuthor} 

supervised by

{\Large \bf Dmitriy Rumynin}


{\large \bf \TheModule} 

Submitted to \TheUni 

\vspace*{36pt}
{\Large \bf \TheDept} 

\TheSubDate 

\vspace*{36pt}
\includegraphics[width=5cm]{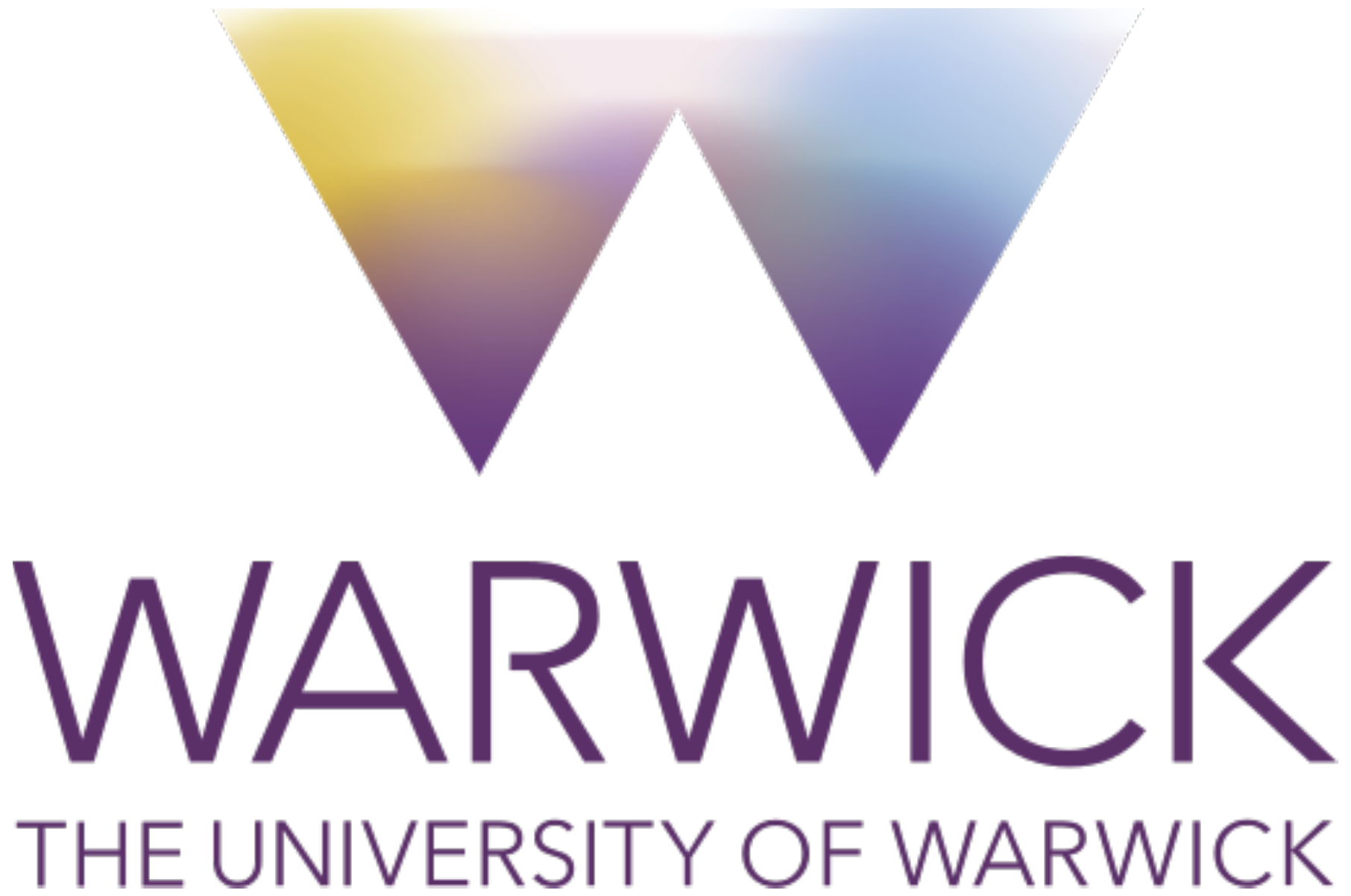} 

\end{center}
\end{spacing}
\end{center}
\end{titlepage}

\setcounter{page}{2}
\tableofcontents
\cleardoublepage
\onehalfspacing
\pagenumbering{arabic}
\onehalfspacing
\raggedbottom

%

\section{Introduction}

This project considers the finite symmetry subgroups of the orthogonal group $\Orth(3)\subset \GL(3,\R)$ and how these can be embedded into one another. Of particular interest are the index $2$ containments, which were classified by Conway and Smith in \citep{CS}. The special orthogonal group $\SO(3) \subset \SL(3,\R)$ admits a double cover from the spinor group $\Spin(3) \cong \SU(2) \subset \SL(2,\C)$, and lifting our subgroups up preserves our network of containments. Those subgroups not contained in $\SO(3)$ are lifted to the pinor groups $\Pin_{\pm}(3)$ of which there are two choices. We then explore $KR$-theory as introduced by Atiyah \citep{Atiyah66} in 1966, which is a variant of topological $K$-theory when dealing with a topological space equipped with an involution. In the case of the index 2 containments $G\lhd \widehat{G}$, the quotient spaces $\C^2 \git G$, can be equipped by an involution via the action of $\widehat{G} / G$. In 1983, Gonzalez-Sprinberg and Verdier \citep{GSV} showed how one can view the McKay correspondence from \citep{McKay80} as an isomorphism between the $G$-equivariant $K$-theory $K_G(\C^2)$ and the $K$-theory of $\widetilde{\C^2 \git G}$, the minimal resolution of the singularity.

In section 2, we go over the finite subgroups of $\Orth(3)$ including explicit matrix generators and construct a graph of the index $2$ containments. These are then lifted by their double covers to their respective spinor or pinor groups. This lifting preserves their network of containments yielding an analogous graph. We also produce MAGMA code for each of the subgroups of $\Orth(3)$ and $\Pin_{\pm}(3)$ which can be found in \citep{joncheahMAGMA}.

In section 3, we take a brief diversion into topological $K$-theory, mainly citing \citep{HatcherVBKT} and \citep{Atiyah67}. We follow Atiyah and Segal \citep{AtiyahSegal} in constructing $G$-equivariant $K$-theory for use in section 4. In section 4, we give a brief expository overview of the McKay correspondence observed by McKay in \citep{McKay80} and the statement in terms of $K$-theory from Gonzalez-Sprigberg and Verdier \citep{GSV}, as well as an example in the explicit case of the binary dihedral group $\mathrm{BD}_{16}$.

In section 5, we use the previously constructed index 2 containments as $C_2$ graded subgroups, and calculate the Real and complex Frobenius-Schur indicators. Applying Dyson's classification of antilinear block structures (see \citep{JTDR}, \citep{Dyson}, or \citep{GM}), we produce decorated McKay graphs for each of the containments. Our MAGMA code for the calculation of the indicators can also be found in \citep{joncheahMAGMA}.

In section 6, we build $KR$-theory as was introduced in \citep{Atiyah66} and apply it to the case of our Kleinian singularities and $C_2$-graded groups. This allows us to state the final conjecture in section 7, a form of the McKay Correspondence for $KR$-theory in the case of $C_2$-graded groups.

We use the following notation throughout. Finite groups $G \lhd \widehat{G}$ give an index 2 containment. The symbol $\mathbbm{1}$ is used to represent the identity transformation, or the identity matrix of appropriate dimension. The group $C_2$ is multiplicative and might be written as $\left\{ 1,-1\right\}$, $\left\{\mathbbm{1}, -\mathbbm{1}\right\}$, or $\left\{ 1,\bold{x}\right\}$.

\pagebreak

\section{Finite Subgroups}

\subsection{Finite Subgroups of $\Orth(3)$}

We begin with a classical result.

\begin{thm}\label{thm:GLsubgroupsConjugateO3}
	Every finite subgroup of $\GL(3,\R)$ is conjugate to a finite subgroup of $\Orth(3)$.
\end{thm}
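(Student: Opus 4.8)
The plan is to exhibit an explicit inner product on $\R^3$ that is invariant under the action of a given finite subgroup $G \subset \GL(3,\R)$, and then conjugate $G$ into the orthogonal group of the \emph{standard} inner product. The key observation is that a matrix group preserving some positive-definite symmetric bilinear form becomes a group of orthogonal matrices once we change to an orthonormal basis for that form.

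First I would construct the invariant form by \emph{averaging}. Starting from the standard inner product $\langle u, v\rangle = u^\top v$, I would define a new bilinear form
\begin{equation*}
	B(u,v) = \frac{1}{|G|} \sum_{g \in G} \langle gu, gv \rangle = \frac{1}{|G|} \sum_{g \in G} (gu)^\top (gv).
\end{equation*}
Since $G$ is finite this sum is well defined, and for any $h \in G$ one has $B(hu, hv) = B(u,v)$ because left multiplication by $h$ merely permutes the summands $g \mapsto gh$ over the group. Thus $B$ is $G$-invariant. I would then check that $B$ is symmetric (immediate, as each summand is) and positive definite: $B(v,v) = \tfrac{1}{|G|}\sum_g \|gv\|^2 \geq 0$, with equality forcing $gv = 0$ for all $g$, in particular $\mathbbm{1}\cdot v = v = 0$.

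Next I would pass to matrices. Writing $B(u,v) = u^\top P v$ for a symmetric positive-definite matrix $P$, the invariance condition $B(gu,gv) = B(u,v)$ reads $g^\top P g = P$ for every $g \in G$. By the spectral theorem (or a Cholesky/symmetric square-root factorization) I would write $P = S^\top S$ for some invertible $S \in \GL(3,\R)$. Substituting gives $g^\top S^\top S g = S^\top S$, and setting $g' = S g S^{-1}$ one computes $(g')^\top g' = S^{-\top} g^\top S^\top S g S^{-1} = S^{-\top} (S^\top S) S^{-1} = \mathbbm{1}$, so each $g'$ is orthogonal. Hence $S G S^{-1} \subset \Orth(3)$ is the desired conjugate subgroup, which is finite and isomorphic to $G$.

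The argument is essentially formal and has no genuinely hard step; the only point demanding a little care is the passage from the invariant form to the conjugating matrix $S$, where I must ensure that the square root $S$ of $P$ is a genuine real invertible matrix (guaranteed by positive-definiteness) so that the conjugation is by an element of $\GL(3,\R)$. I would also remark that the whole argument works verbatim in any dimension $n$, giving the classical fact that every finite subgroup of $\GL(n,\R)$ is conjugate into $\Orth(n)$; the dimension $3$ plays no special role here and is only relevant for the subsequent classification.
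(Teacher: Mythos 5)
Your proof is correct and follows essentially the same route as the paper: averaging the standard inner product over $G$ to obtain a $G$-invariant positive-definite form, then changing basis. In fact you make explicit the final step (factoring $P = S^\top S$ and verifying $SGS^{-1} \subset \Orth(3)$) that the paper compresses into the phrase ``up to conjugation (or a change of coordinates in $\R^3$)''.
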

\begin{proof}
	Let $G\subset\GL(3,\R)$ be finite, and $\Span{\cdot,\cdot}$ be the usual inner product on $\R^3$. We construct $\Span{\cdot,\cdot}_G:\R^3\times\R^3\to\R$ by,	
	\begin{equation*}
		\Span{u,v}_G := \frac{1}{\left|G\right|} \sum_{g\in G} \Span{g \cdot u, g\cdot v}.
	\end{equation*}
	This is well defined as $G$ is finite, and we note that $\Span{\cdot,\cdot}_G$ inherits symmetry and bilinearity from $\Span{\cdot,\cdot}$. Furthermore, for any $u\in \R^3 \setminus \{0\}$
	\begin{equation*}
		\Span{u,u}_G = \frac{1}{\left|G\right|} \sum_{g\in G} \Span{g \cdot u,g \cdot u} > 0 
	\end{equation*}	
	and for any $h\in G$,
	\begin{equation*}
		\Span{h\cdot u, h\cdot v}_G = \frac{1}{\left|G\right|} \sum_{g\in G} \Span{g h\cdot u,g h\cdot v} = \frac{1}{\left|G\right|} \sum_{gh\in G} \Span{ g h\cdot u, h\cdot v} = \Span{ u,  v}_G,
	\end{equation*}
	so $\Span{\cdot,\cdot}_G$ is positive definite and $G$-invariant. Thus up to conjugation (or a change of coordinates in $\R^3$), $G$ is a subgroup of $\Orth(3)$.
	
\end{proof}

The finite subgroups of $\Orth(3)$ are well studied and can be found in \citep{CS} or \citep{FarleyOrtiz}. Up to conjugacy, there are 14 finite subgroups of $\Orth(3)$, and we list these in Table \ref{Table:O3Subgroups}. These are be split into 7 infinite families which each leave an axis of $\R^3$ invariant, and 7 sporadic groups which only leave the origin invariant. The latter manifest themselves as symmetry groups of polyhedra centred on the origin, whereas the former are the symmetry groups of various prisms/antiprisms \citep{CS}. Following the work of these authors, we have reproduced these groups in Magma code with explicit generators.

\renewcommand{\arraystretch}{1.5}
\begin{table}[!h]
	\centering
	\setlength\tabcolsep{3.5pt}
	\begin{tabular}{|cc|cc|}
		\hline
		\multicolumn{2}{|c|}{Axial groups}                                                  & \multicolumn{2}{c|}{Polyhedral groups}                                           \\ \hline
		\multicolumn{1}{|c|}{Cyclic }           & $C_n= \Span{A}$                      & \multicolumn{1}{c|}{Tetrahedral }       & $T_{12}=\Span{I^2, Y}$            \\ \hline
		\multicolumn{1}{|c|}{Diplo-cyclic }     & $2C_{2n} = \Span{A, -\mathbbm{1}}$ & \multicolumn{1}{c|}{Diplo-tetrahedral } & $2T_{24}=\Span{I^2,Y,-\mathbbm{1}}$ \\ \hline
		\multicolumn{1}{|c|}{Cyclo-cyclic }     & $CC_{2n}=\Span{-A'}$                  & \multicolumn{1}{c|}{Octahedral }        & $O_{24}=\Span{I,Y}$               \\ \hline
		\multicolumn{1}{|c|}{Dihedral }         & $D_{2n}=\Span{A,B}$                  & \multicolumn{1}{c|}{Tetra-octahedral }  & $TO_{24}=\Span{-I,Y}$             \\ \hline
		\multicolumn{1}{|c|}{Cyclo-dihedral }   & $CD_{2n}=\Span{A,-B}$                & \multicolumn{1}{c|}{Diplo-octahedral }  & $2O_{48}=\Span{I,Y, -\mathbbm{1}}$ \\ \hline
		\multicolumn{1}{|c|}{Dihedro-dihedral } & $DD_{4n}=\Span{-A',B}$               & \multicolumn{1}{c|}{Icosahedral }       & $I_{60}=\Span{X,Z}$               \\ \hline
		\multicolumn{1}{|c|}{Diplo-Dihedral }   & $2D_{8n}=\Span{A,B, -\mathbbm{1}}$    & \multicolumn{1}{c|}{Diplo-icosahedral } & $2I_{120}=\Span{X,Z,-\mathbbm{1}}$ \\ \hline
	\end{tabular}
	
	\caption{The finite subgroups of $\Orth(3)$ and their matrix generators. The axial groups are generated by rotation matrices $A=\begin{psmallmatrix}
			\cos{\frac{2\pi}{n}} & -\sin{\frac{2\pi}{n}} & 0 \\
			\sin{\frac{2\pi}{n}} & \cos{\frac{2\pi}{n}} & 0 \\
			0 & 0 & 1
		\end{psmallmatrix}$, $A'=\begin{psmallmatrix}
			\cos{\frac{\pi}{n}} & -\sin{\frac{\pi}{n}} & 0 \\
			\sin{\frac{\pi}{2n}} & \cos{\frac{\pi}{n}} & 0 \\
			0 & 0 & 1
		\end{psmallmatrix}$, and $B=\begin{psmallmatrix}
			-1 & 0 & 0 \\
			0 & 1 & 0 \\
			0 & 0 & -1
		\end{psmallmatrix}$. The polyhedral groups are generated by rotation matrices $Y=\begin{psmallmatrix}
			0 & 0 & 1 \\
			1 & 0 & 0 \\
			0 & 1 & 0
		\end{psmallmatrix}$, $I=\begin{psmallmatrix}
			1 & 0 & 0 \\
			0 & 0 & 1 \\
			0 & -1 & 0
		\end{psmallmatrix}$, $X=\frac{1}{\sqrt{5}}\begin{psmallmatrix}
			-\sqrt{5} & 0 & 0 \\
			0 &-{1} & 2 \\
			0 & 2 & 1
		\end{psmallmatrix}$, and $Z=\begin{psmallmatrix}
			\cos{\frac{2\pi}{5}} & -\sin{\frac{2\pi}{5}} & 0 \\
			\sin{\frac{2\pi}{5}} & \cos{\frac{2\pi}{5}} & 0 \\
			0 & 0 & 1
		\end{psmallmatrix}$.}
	
	\label{Table:O3Subgroups}
\end{table}
\renewcommand{\arraystretch}{1}

The groups $C_n, D_{2n}, T_{12}, O_{24}, I_{60}$ have all of their elements with determinant equal to $1$, and are precisely the orientation preserving groups contained in the special orthogonal group $\SO(3)$.

To use Conway and Smith's terminology, the four groups $CC_{2n}$, $CD_{4n}$, $DD_{4n}$ and $TO_{24}$ are called \emph{hybrid groups}. Given an index 2 containment of groups $H\lhd G\subset \Orth(3)$, we let the set $HG$ consist of all $h\in H$, and $-g$ for all $g\in G \setminus H$, and make this into a group by the standard composition of orthogonal transformations.

The groups with labels of the form $2G$ are the direct sums of a group $G\subset \SO(3)$ and the inversion group $\{\pm\mathbbm{1}\}$ where $\mathbbm{1}$ is the identity transformation. The inversion $-\mathbbm{1}$ is central in $\Orth(3)$. Conway and Smith call these diploid groups and denote these by $\pm G$. We avoid this for the sake of cleaner notation when lift up  to their covering groups. The \emph{diplo-} groups $2G$ should not be confused with the \emph{binary} groups $\mathrm{BG}$ to be introduced in the next section.

The index 2 containments are given by Conway and Smith in \citep{CS}, and we recreate their graph of containments in Figures  \ref{fig:O3Polyhedral} and \ref{fig:O3Axial}.

\begin{figure}[p]
	\centering
	
	\[\begin{tikzcd}
		{2O_{48}} &&&& {2I_{120}} \\
		&& {2T_{24}} \\
		{TO_{24}} & {O_{24}} &&& {I_{60}} \\
		&& {T_{12}}
		\arrow[from=4-3, to=3-2]
		\arrow[from=4-3, to=3-1]
		\arrow["\cong"{description}, draw=none, from=3-1, to=3-2]
		\arrow["5"', from=4-3, to=3-5]
		\arrow[from=4-3, to=2-3]
		\arrow[from=2-3, to=1-1]
		\arrow[from=3-1, to=1-1]
		\arrow[from=3-2, to=1-1]
		\arrow[from=3-5, to=1-5]
		\arrow["5"', from=2-3, to=1-5]
	\end{tikzcd}\]
	\caption{Containments between the polyhedral subgroups of $\Orth(3)$. The index is 2 when not labelled.}
	\label{fig:O3Polyhedral}
\end{figure}


\begin{figure}[p]
	\centering
	\def\svgwidth{\columnwidth}
	\[\begin{tikzcd}[column sep=small]
		& {DD_{4n}} && {2D_{2n}} \\
		&&  		 &  \\
		{D_{2n}} & {CD_{2n}} && {CC_{2n}} & {2C_n} \\
		& 			 && 		  &  \\
		&& {C_n} \\
		\arrow[from=5-3, to=3-2]
		\arrow[shift right=1,from=5-3, to=3-1]
		\arrow["\cong"{description}, draw=none, from=3-2, to=3-1]
		\arrow[from=5-3, to=3-4]
		\arrow[from=3-4, to=3-5, dashrightarrow]
		\arrow[from=3-1, to=1-2]
		\arrow[from=3-2, to=1-2]
		\arrow[from=3-5, to=1-4]
		\arrow[from=3-4, to=1-2]
		\arrow[Rightarrow, from=1-2, to=1-4, dashed]
		\arrow[shift left=1, from=5-3, to=3-5]
		\arrow[from=3-2, to=1-4, crossing over]
		\arrow[from=3-1, to=1-4, crossing over]
		\arrow[loop left, from=3-1, to=3-1, Rightarrow, dashed, looseness=10]
		\arrow[loop right, from=3-2, Leftarrow, dashed]
		\arrow[loop below, from=5-3, to=5-3, dashed, looseness=15]
		\arrow[loop right, from=3-5, to=3-5, dashed, leftarrow, looseness=10]
		\arrow[loop right, from=1-4, dashed, Leftarrow]
	\end{tikzcd}\]
	\caption{Index 2 containments between the axial subgroups of $\Orth(3)$. Dashed arrows indicate maps from groups indexed by $m = n/2$ to groups indexed by $n$ exist in the case that $n$ is even. Double lines between groups indicate there are two different ways by which the smaller group is contained in the larger.}
	\label{fig:O3Axial}
\end{figure}

	\pagebreak
	
	\subsection{Finite Subgroups of $\SU(2,\C)$}
	
	In a similar argument to the proof of theorem \ref{thm:GLsubgroupsConjugateO3}, one can show that every finite subgroup of $\SL(2,\C)$, gives leaves invariant a particular hermitian inner product. From this, we conclude that our subgroup must be conjugate to some finite subgroup $G$ of the special unitary group $\SU(2,\C)$. The special unitary group admits a double covering over $\SO(3,\R)$, and is hence isomorphic to $\Spin(3)$ the double cover of $\SO(3,\R)$.

	The finite subgroups of $\SU(2)$ were calculated by Klein in \citep{FK} and are precisely
	\begin{align*}
		C_n, \quad 2C_{2n} \text{ (for n odd)}, \quad \mathrm{BD}_{4n}, \quad \mathrm{BT}_{24}, \quad \mathrm{BO}_{48}, \quad \mathrm{BI}_{120}.
	\end{align*}

	The last 4 are the binary dihedral, tetrahedral, binary octahedral, and binary icosahedral groups which are respectively the central extensions of the dihedral, tetrahedral, octahedral, and icosahedral subgroups of $\SO(3)$ by $C_2 =\left\{\pm\mathbbm{1}\right\}$. 
	Cyclic groups $C_n=C_{2m} \subset \SO(3)$ of even order are lifted to a cyclic group of order $2n$, which we will denote $\mathrm{BC}_{2n} \subset \SU(2)$ to help distinguish them from groups in $\SO(3)$. However, when $n$ is odd, the group $C_n = \Span{A\mid A^n=\mathbbm{1}} \subset \SO(3)$ can either be lifted to an isomorphic cyclic group or to a direct product $C_n\times C_2\cong 2C_{2n}$ isomorphic to a diplo-cyclic group.
	For a more detailed explanation of this classification see \citep{IVD}, \citep{JCS} or \citep{joncheahSU2}.
	
	\renewcommand{\arraystretch}{1.5}
	\begin{table}[!h]
		\centering
		\begin{tabular}{|c|c|}
			\hline
			Group                  & Matrix generators                   \\ \hline
			Cyclic group           & $C_n= \Span{A}$                     \\ \hline
			Diplo-cyclic ($n$ odd) & $2C_{2n} = \Span{A, -\mathbbm{1}}$ \\ \hline
			Binary dihedral        & $\mathrm{BD}_{4n}=\Span{A,B}$       \\ \hline
			Binary tetrahedral     & $\mathrm{BT}_{24}=\Span{I^2,Y}$     \\ \hline
			Binary octahedral      & $\mathrm{BO}_{48}=\Span{I,Y}$       \\ \hline
			Binary Icosahedral     & $\mathrm{BI}_{120}=\Span{X,Z}$      \\ \hline
		\end{tabular}
		
		\caption{The finite subgroups of $\Spin(3)\simeq \SU(2)$ and their matrix generators. The axial groups are generated by matrices $A=\begin{psmallmatrix}
				\ep^1 &  0 \\
				0 & \ep^{-1}
			\end{psmallmatrix}$, where $\ep$ is a primitive $n$-th root of unity, and $B=\begin{psmallmatrix}
				0 & 1 \\
				-1 & 0
			\end{psmallmatrix}$. The polyhedral groups are generated by matrices $Y=\frac{1}{2}\begin{psmallmatrix}
				-1+i & 1+i\\
				-1+i &-1-i
			\end{psmallmatrix}$, $I=\begin{psmallmatrix}
				i & 0 \\
				0 & -i
			\end{psmallmatrix}$, $X=\frac{1}{\sqrt{5}}\begin{psmallmatrix}
				-\mu+\mu^4 & \mu^2-\mu^3 \\
				\mu^2-\mu^3 & \mu-\mu^4
			\end{psmallmatrix}$, and $Z=\begin{psmallmatrix}
				-\mu^3 & 0 \\
				0 & -\mu^2
			\end{psmallmatrix}$, where $\mu$ is a primitive $5$th root of unity.}
		
		\label{Table:SU2Subgroups}
	\end{table}
	\renewcommand{\arraystretch}{1}
	
	We note that apart from the cyclic groups of odd order, all the finite subgroups $G \subset \SL(2,\C)$ are of even order and contain $-\mathbbm{1}$, the unique central element of order $2$.
	
	\begin{figure}[!hbtp]
		\centering
		\def\svgwidth{\columnwidth}
		\[\begin{tikzcd}[column sep=small]
			{\mathrm{BO}_{48}} &&&&& {\mathrm{BT}_{120}} && {\mathrm{BD}_{4n}} \\
			\\
			&&&&&&&\mathrm{BC}_{2n}& {2C_{2n}}\\
			&& {\mathrm{BT}_{24}} &&&&& {C_{n}}
			\arrow[from=4-3, to=1-1]
			\arrow["5"', from=4-3, to=1-6]
			\arrow[from=3-8, to=1-8]
			\arrow[from=4-8, to=3-8]
			\arrow[from=4-8, to=3-9,"n \text{ odd}"']
			\arrow[from=3-9, to=1-8,"n \text{ odd}"']
			\arrow[loop right, from=1-8, Leftarrow, dashed]
			\arrow[loop right, from=3-9, to=3-9, leftarrow, dashed, looseness=10]
		\end{tikzcd}\]
		\caption{Containments between the finite subgroups of $\SU(2)=\Spin(3)$. The index is 2 when not labelled.}
		\label{fig:Spin3Subgroups}
	\end{figure}

	\begin{defn}\label{def:McKayGraph}
		Fix $G$ a finite subgroup of $\SU(2)$. Let $\rho_{\text{Nat}} $ denote the natural representation arising from the inclusion $G \to \SU(2) \subset \GL(2,\C)$, and let $\mathrm{Irr}(G) := \left\{\rho_1, \dots  ,\rho_n\right\}$ denote the set of irreducible complex representations. For each $i$, consider the representation $\rho_{\text{Nat}} \otimes \rho_j$. As $G$ is reductive, there exist $m_{ij}$ such that,
		\begin{equation*}
			\rho_{\text{Nat}} \otimes \rho_j = \bigoplus_{i=1}^n m_{ij}\rho_j.
		\end{equation*}
		
		We construct the \emph{McKay graph} or \emph{McKay quiver} of $G$ to be the graph whose vertex set is $ \mathrm{Irr}(G) := \left\{\rho_1, \dots  ,\rho_n\right\}$, and has $m_{ij}$ directed edges from $\rho_i$ to $\rho_j$.
	\end{defn}
	Note that $m_{ij}$ is equal to the dimension of $\mathrm{Hom}_{\C G}(\rho_i,\rho_{\text{Nat}})$. When we have a pair of directed edges, $\rho_i$ to $\rho_j$ and $\rho_j$ to $\rho_i$, we instead draw a single undirected edge.

	\begin{eg}
		Let $G=\mathrm{BT}_{24}$. We calculate the character table of $G$ using Magma (see file \verb!Pin3--BT24.txt! \citep{joncheahMAGMA}). This could also be done by hand by considering the character table of $G /Z(G) = \mathrm{BT}_{24} / \left\{\pm \mathbbm{1}\right\} \cong T_{12} \cong A_4$.
		\begin{table}[!h]
			\centering
			\begin{tabular}{|c | c c c c c c c|}
				\hline
				Class	 & 1 & 2 & 3 & 4 & 5 & 6 & 7 \\
				Size	 & 1 & 1 & 4 & 4 & 6 & 4 & 4 \\
				Order	 & 1 & 2 & 3 & 3 & 4 & 6 & 6 \\
				\hline
				$\chi_1$ & 1 & 1 & 1 & 1 & 1 & 1 & 1 \\
				$\chi_2$ & 1 & 1 & $\omega$ & $\omega^2$ & 1 & $\omega^2$ & $\omega$ \\
				$\chi_3$ & 1 & 1 & $\omega^2$ & $\omega$ & 1 & $\omega$ & $\omega^2$ \\
				$\chi_4$ & 2 & -2 & -1 & -1 & 0 & 1 & 1 \\
				$\chi_5$ & 2 & -2 & $-\omega^2$ & $-\omega$ & 0 & $\omega$ & $\omega^2$ \\
				$\chi_6$ & 2 & -2 & $-\omega$ & $-\omega^2$ & 0 & $\omega^2$ & $\omega$ \\
				$\chi_7$ & 3 & 3 & 0 & 0 & 1 & 0 & 0 \\
				\hline
			\end{tabular}
			\caption{Character table of the binary tetrahedral group $\mathrm{BT}_{24}$. Here, $\omega$ is a primitive third root of unity.}
			\label{Table:BT24chartable}
		\end{table}
		
		By considering the traces of the explicit matrix generators of $\mathrm{BT}_{24}$, one can verify that $\rhoN = \rho_4$. One can then check that,
		\begin{align*}
			\chi_4 \cdot \chi_1 &= \chi_4 \\
			\chi_4 \cdot \chi_2 &= \chi_6 \\
			\chi_4 \cdot \chi_3 &= \chi_5 \\
			\chi_4 \cdot \chi_4 &= \chi_1 + \chi_7 \\
			\chi_4 \cdot \chi_5 &= \chi_3 + \chi_7 \\
			\chi_4 \cdot \chi_6 &= \chi_2 + \chi_7 \\
			\chi_4 \cdot \chi_7 &= \chi_4 + \chi_5 + \chi_6
		\end{align*}
		which allows us to draw the McKay graph of $\mathrm{BT}_{24}$.
		
		\begin{figure}[!h]
			\centering
			\def\svgwidth{\columnwidth}
			\[\begin{tikzcd}[column sep=small, row sep=small]
				\rho_3 & \rho_5 & \rho_7 & \rho_6 & \rho_2 \\
				&& \rho_4 \\
				&& \rho_1
				\arrow[no head, from=1-5, to=1-4]
				\arrow[no head, from=1-2, to=1-1]
				\arrow[no head, from=1-2, to=1-3]
				\arrow[no head, from=1-3, to=1-4]
				\arrow[no head, from=1-3, to=2-3]
				\arrow[no head, from=2-3, to=3-3]
			\end{tikzcd}\]
			\caption{The McKay graph of $\mathrm{BT}_{24}$ with $\rho_{\text{Nat}}=\rho_4$.}
			\label{fig:McKayGraphBT24}
		\end{figure}

	\end{eg}

	\begin{figure}[p]
		\def\svgwidth{\columnwidth}
		$C_n$
		\vspace*{-1cm}
		\[\begin{tikzcd}[column sep=tiny, row sep=small]
			&&&& \times \\
			\bullet & \circ & \circ & \circ & \circ & \circ & \circ & \circ & \bullet
			\arrow[no head, from=2-5, to=2-4]
			\arrow[no head, from=2-2, to=2-1]
			\arrow[no head, from=2-5, to=2-6]
			\arrow[no head, from=2-8, to=2-9]
			\arrow[shift left=1, no head, from=2-1, to=1-5]
			\arrow[shift left=1, no head, from=1-5, to=2-9]
			\arrow[no head, from=2-8, to=2-7]
			\arrow[no head, from=2-3, to=2-2]
			\arrow["\cdots"{description}, no head, from=2-7, to=2-6]
			\arrow["\cdots"{description}, no head, from=2-3, to=2-4]
		\end{tikzcd}\]
		
		
		\vspace*{1cm}
		$\mathrm{BD}_{4n}$
		\vspace*{-1cm}
		\[\begin{tikzcd}[column sep=tiny, row sep=small]
			\times &&&&&&&& \circ \\
			& \bullet & \circ & \circ & \cdots & \circ & \circ & \circ \\
			\circ &&&&&&&& \circ
			\arrow[no head, from=2-7, to=2-6]
			\arrow[no head, from=2-4, to=2-3]
			\arrow[no head, from=2-3, to=2-2]
			\arrow[no head, from=2-7, to=2-8]
			\arrow[no head, from=2-4, to=2-5]
			\arrow[no head, from=2-5, to=2-6]
			\arrow[no head, from=2-8, to=1-9]
			\arrow[no head, from=2-8, to=3-9]
			\arrow[no head, from=2-2, to=1-1]
			\arrow[no head, from=2-2, to=3-1]
		\end{tikzcd}\]
		
		\vspace*{1cm}
		$\mathrm{BT}_{24}$
		\vspace*{-0.5cm}
		\[\begin{tikzcd}[column sep=small, row sep=small]
			\circ & \circ & \circ & \circ & \circ \\
			&& \bullet \\
			&& \times
			\arrow[no head, from=1-5, to=1-4]
			\arrow[no head, from=1-2, to=1-1]
			\arrow[no head, from=1-2, to=1-3]
			\arrow[no head, from=1-3, to=1-4]
			\arrow[no head, from=1-3, to=2-3]
			\arrow[no head, from=2-3, to=3-3]
		\end{tikzcd}\]
		
		\vspace*{1cm}
		$\mathrm{BO}_{48}$
		\vspace*{-0.5cm}	
		\[\begin{tikzcd}[column sep=small, row sep=small]
			\times & \bullet & \circ & \circ & \circ & \circ & \circ \\
			&&& \circ
			\arrow[no head, from=1-5, to=1-4]
			\arrow[no head, from=1-5, to=1-6]
			\arrow[no head, from=1-6, to=1-7]
			\arrow[no head, from=1-4, to=1-3]
			\arrow[no head, from=1-3, to=1-2]
			\arrow[no head, from=1-4, to=2-4]
			\arrow[no head, from=1-2, to=1-1]
		\end{tikzcd}\]
		
		\vspace*{1cm}
		$\mathrm{BI}_{120}$
		\vspace*{-0.5cm}
		\[\begin{tikzcd}[column sep=small, row sep=small]
			\times & \bullet & \circ & \circ & \circ & \circ & \circ & \circ \\
			&&&&& \circ
			\arrow[no head, from=1-7, to=1-6]
			\arrow[no head, from=1-4, to=1-3]
			\arrow[no head, from=1-4, to=1-5]
			\arrow[no head, from=1-3, to=1-2]
			\arrow[no head, from=1-2, to=1-1]
			\arrow[no head, from=1-7, to=1-8]
			\arrow[no head, from=1-6, to=2-6]
			\arrow[no head, from=1-5, to=1-6]
		\end{tikzcd}\]
		
		\caption{McKay graphs of the finite subgroups of $\SU(2)$. The cross identifies the trivial representation and the solid node is (or in the cyclic case is a component of) the natural representation.}
		\label{fig:McKayGraphs}
	\end{figure}
	
	Figure \ref{fig:McKayGraphs} shows the McKay graphs of the finite subgroups of SU(2). We have omitted the case of $2C_n$, who’s McKay graph is two disjoin copies of that of $C_n$. These graphs are exactly the \emph{extended Dynkin diagrams} of types $\widetilde{A}, \widetilde{D}, \widetilde{E_6}, \widetilde{E_7}, \widetilde{E_8}$ respectively. When one omits the vertex corresponding to the trivial representation, we are left with the \emph{Dynkyn diagrams} of types $A_n, D_n, E_6, E_7, E_8$. These occur in many areas of mathematics, including the classifications root systems, semisimple Lie algebras, and finite Coxeter groups.

	\pagebreak
	
	\subsection{Spinors and Pinors}
	
	The spinor group $\Spin (3)$ is the central extension of $\SO(3)$ given by the exact sequence,
	
	\[\begin{tikzcd}
		{1} & {\left\{\pm \mathbbm{1} \right\}} & {\mathrm{Spin}(3)} & {\mathrm{SO}(3)} & {1}
		\arrow["\pi", two heads, from=1-3, to=1-4]
		\arrow["\iota",hook, from=1-2, to=1-3]
		\arrow[from=1-4, to=1-5]
		\arrow[from=1-1, to=1-2]
	\end{tikzcd}\]
	
	which gives a double covering onto the special orthogonal group. Considering the exact sequence resulting from the inclusion into the orthogonal group,
	\[\begin{tikzcd}
		{1} & {\mathrm{SO}(3)} & {\mathrm{O}(3)} & {\left\{\pm \mathbbm{1} \right\}} & {1}
		\arrow["{\mathrm{det}}", two heads, from=1-3, to=1-4]
		\arrow["i", hook, from=1-2, to=1-3]
		\arrow[from=1-4, to=1-5]
		\arrow[from=1-1, to=1-2]
	\end{tikzcd}\]
	we would like to form some group to be a double covering of $\Orth(3)$ which preserves the map of the spinor group $\pi \colon \SU(2) = \Spin(3) \to \SO(3)$. This means the square in Figure \ref{fig:Pin3} will be commutative. As our induced map $\widetilde{\pi}$ will cover elements in $\Orth(3)$ with determinant not equal to $1$, we will lose the ``S'' from the label and call this a $\Pin(3)$ group. Atiyah, Bott, and Shapiro \citep{AtiyahBottShapiro} attribute this back formation to Serre.

	\begin{figure}[!hbtp]
		\centering
		\def\svgwidth{\columnwidth}
		\[\begin{tikzcd}
			& {1} & {1} \\
			& {\left\{\pm \mathbbm{1} \right\}} & {\left\{\pm \mathbbm{1} \right\}} \\
			{1} & {\mathrm{Spin}(3)} & {\mathrm{Pin}_{\pm}(3)} & {\left\{\pm 1 \right\}} & {1} \\
			{1} & {\mathrm{SO}(3)} & {\mathrm{O}(3)} & {\left\{\pm 1 \right\}} & {1} \\
			& {1} & {1}
			\arrow["{\mathrm{det}}", two heads, from=4-3, to=4-4]
			\arrow["i", hook, from=4-2, to=4-3]
			\arrow[from=4-4, to=4-5]
			\arrow[from=4-1, to=4-2]
			\arrow["\pi", two heads, from=3-2, to=4-2]
			\arrow["{\widetilde{\pi}}", two heads, from=3-3, to=4-3]
			\arrow[two heads, from=3-3, to=3-4]
			\arrow["i", hook, from=3-2, to=3-3]
			\arrow[from=3-1, to=3-2]
			\arrow["\iota", hook, from=2-2, to=3-2]
			\arrow[hook, from=2-3, to=3-3]
			\arrow[from=4-2, to=5-2]
			\arrow[from=4-3, to=5-3]
			\arrow[from=1-2, to=2-2]
			\arrow[from=1-3, to=2-3]
			\arrow[from=3-4, to=3-5]
		\end{tikzcd}\]
		\caption{The groups $\Pin_{+}(3)$ and $\Pin_{-}(3)$ are the two central extensions of $\Orth(3)$ by $\left\{ \pm \mathbbm{1}\right\}$ or $\left\{ \pm \mathbbm{1}\right\}$.}
		\label{fig:Pin3}
	\end{figure}

	\pagebreak
	
	\[\begin{tikzcd}
		{\mathbbm{1}} & {\left\{\pm \mathbbm{1} \right\}} & {\mathrm{Pin}(3)} & {\mathrm{O}(3)} & {\mathbbm{1}}
		\arrow["\widetilde{\pi}", two heads, from=1-3, to=1-4]
		\arrow[from=1-2, to=1-3]
		\arrow[from=1-4, to=1-5]
		\arrow[from=1-1, to=1-2]
	\end{tikzcd}\]
	
	It turns out that there are two possible choices of the $Pin(3)$ group. To see this, let $\rho \in \Orth(3)$ be an element of order 2, so $\rho^2=\mathbbm{1} \in \SO(3)$. Since $\widetilde{\pi}$ is surjective, there exists $\widehat{\rho} \in \Pin_{\pm}(3)$ such that $\widetilde{\pi}(\widehat{\rho})=\rho$, then
	
	\begin{equation*}
		\mathbbm{1}=\rho^2=\widetilde{\pi}(\widehat{\rho})^2=\widetilde{\pi}(\widehat{\rho}^2)=\widetilde{\pi} \circ i (\widehat{\rho}^2) = i \circ \pi (\widehat{\rho}^2)
	\end{equation*}
	
	So $\widehat{\rho}^2 \in \mathrm{ker}(\pi) = \mathrm{im}(\iota) = \left\{\pm\mathbbm{1}\right\}$, and we have a choice as to whether our reflection $\rho$ lifts to an element of order $2$ or order $4$. In the case that $\widehat{\rho}^2=-\mathbbm{1}$, we call this group $\Pin_-(3)$, and when  $\widehat{\rho}^2=\mathbbm{1}$ the covering group is called $\Pin_+(3)$.
	
	By \citep{Harvey} we have that these two groups satisfy the following isomorphisms,
	\begin{align*}
		&\Pin_{-}(3) \cong \left\{ A \in \mathrm{U}(2) \mid \det{A}=\pm 1 \right\}, \text{and}\\
		&\Pin_{+}(3) \cong \SU(2)\times C_2.
	\end{align*}
	More explicitly, the preimage of $\left\{\pm \mathbbm{1} \right\}$ in $\Pin_{-}(3)$ is the cyclic group $\left\{\mathbbm{1}, {W} , -\mathbbm{1}, -{W} \right\}$ generated by ${W}: = \left(\begin{smallmatrix}
		i & 0 \\ 0 & i
	\end{smallmatrix}\right)$ a central element with determinant $-1$.\\
	In the other case of $\Pin_{+}(3)$, the preimage is $\left\{ \left(\mathbbm{1},{1}\right),\left(\mathbbm{1},\bold{x}\right),\left(-\mathbbm{1},{1}\right),\left(-\mathbbm{1},\bold{x}\right) \right\}$ which is isomorphic to the Klein four-group $C_2 \times C_2$.
	
	The subgroups of $\SO(3)$ can be lifted to $\SL(2,\C)$ via the double cover. This preserves the network of containments. By taking the cases of $\Pin_{+}(3)$ and $\Pin_{-}(3)$ separately, we can get all the groups and their index $2$ containments. We list the groups and their matrix generators in Table \ref{Table:Pin3Subgroups} and produce graphs of index $2$ containments in Figures \ref{fig:O3Polyhedral} and \ref{fig:O3Polyhedral} analogous to those in \citep{CS}.
	
	\renewcommand{\arraystretch}{1.5}
	\begin{table}[!h]
		\centering
		\setlength\tabcolsep{3.5pt}
		\begin{tabular}{|c|c|c|}
			\hline
			Group                    & $\Pin_-(3)$                             & $\Pin_+(3)$                                             \\ \hline
			Binary cyclic            & $\mathrm{BC}_{2n}= \Span{A}$            & $\mathrm{BC}_{2n}= \Span{A}$                            \\ \hline
			Binary diplo-cyclic      & $\mathrm{B_-2C}_{4n}=\Span{A,W}$        & $\mathrm{B_+2C}_{4n}=\Span{A,\bold{x}}$                 \\ \hline
			Binary cyclo-cyclic      & $\mathrm{B_-CCD}_{4n}=\Span{A'W
			}$  & $\mathrm{B_+CCD}_{4n}=\Span{\bold{x}A
			}$    \\ \hline
			Binary dihedral          & $\mathrm{BD}_{4n}=\Span{A,B}$           & $\mathrm{BD}_{4n}=\Span{A,B}$                           \\ \hline
			Binary cyclo-dihedral    & $\mathrm{B_-CD}_{4n}=\Span{A,BW}$       & $\mathrm{B_+CD}_{4n}=\Span{A,\bold{x}B}$                \\ \hline
			Binary dihedro-dihedral  & $\mathrm{B_-DD}_{8n}=\Span{A'W
			,B}$ & $\mathrm{B_+DD}_{8n}=\Span{\bold{x}A'
			,B}$ \\ \hline
			Binary diplo-dihedral    & $\mathrm{B_-2D}_{8n}=\Span{A,B,W}$      & $\mathrm{B_+2D}_{8n}=\Span{A,B,\bold{x}}$               \\ \hline \hline
			Binary tetrahedral       & $\mathrm{BT}_{24}=\Span{I^2,Y}$         & $\mathrm{BT}_{24}=\Span{I^2,Y}$                         \\ \hline
			Binary diplo-tetrahedral & $\mathrm{B_-2T}_{48}=\Span{I^2,Y,W}$    & $\mathrm{B_+2T}_{48}=\Span{I^2,Y,\bold{x}}$             \\ \hline
			Binary octahedral        & $\mathrm{BO}_{48}=\Span{I,Y}$           & $\mathrm{BO}_{48}=\Span{I,Y}$                           \\ \hline
			Binary tetra-octahedral  & $\mathrm{B_-TO}_{48}=\Span{IW,Y}$       & $\mathrm{B_+TO}_{48}=\Span{\bold{x}I,Y}$                \\ \hline
			Binary diplo-octahedral  & $\mathrm{B_-2O}_{96}=\Span{I,Y,W}$      & $\mathrm{B_+2O}_{96}=\Span{I,Y,\bold{x}}$               \\ \hline
			Binary icosahedral       & $\mathrm{BI}_{120}=\Span{X,Z}$          & $\mathrm{BI}_{120}=\Span{X,Z}$                          \\ \hline
			Binary diplo-icosahedral & $\mathrm{B_-2I}_{240}=\Span{X,Z,W}$     & $\mathrm{B_+2I}_{240}=\Span{X,Z,\bold{x}}$              \\ \hline
		\end{tabular}
		
		\caption{The finite subgroups of $\Pin_\pm(3)$ and their matrix generators. To match the previous constructions, the axial groups are generated by matrices $A'=\begin{psmallmatrix}
				\ep^1 &  0 \\
				0 & \ep^{-1}
			\end{psmallmatrix}$, where $\ep$ is a primitive $4n$-th root of unity, $A={A'}^2$ and $B=\begin{psmallmatrix}
				0 & 1 \\
				-1 & 0
			\end{psmallmatrix}$. The polyhedral groups are generated by matrices $Y=\frac{1}{2}\begin{psmallmatrix}
				-1+i & 1+i\\
				-1+i &-1-i
			\end{psmallmatrix}$, $I=\begin{psmallmatrix}
				i & 0 \\
				0 & -i
			\end{psmallmatrix}$, $X=\frac{1}{\sqrt{5}}\begin{psmallmatrix}
				-\mu+\mu^4 & \mu^2-\mu^3 \\
				\mu^2-\mu^3 & \mu-\mu^4
			\end{psmallmatrix}$, and $Z=\begin{psmallmatrix}
				-\mu^3 & 0 \\
				0 & -\mu^2
			\end{psmallmatrix}$, where $\mu$ is a primitive $5$th root of unity. In the $\Pin_-(3)$ cases, we have lifted $-\mathbbm{1}\in\Orth(3)$ to $W=\frac{1}{2}\begin{psmallmatrix}
			-i & 0\\
			0 & i
		\end{psmallmatrix}$ which is central and of order $4$. In the $\Pin_+(3)$ cases, $-\mathbbm{1}\in\Orth(3)$ is lifted to a central element $\bold{x}$ of order 2. In our MAGMA calculations this is achieved by taking direct sums with appropriate elements of $C_2 = \left\{1,\bold{x}\right\}$.}
		
		\label{Table:Pin3Subgroups}
	\end{table}
	\renewcommand{\arraystretch}{1}
	
	\begin{figure}[p]
		\centering
		\def\svgwidth{\columnwidth}
		\[\begin{tikzcd}[column sep=small, sep=small]
			& {\mathrm{B_{-}2I}_{240}} &&&& {\mathrm{B_{+}2I}_{240}} \\
			&&& {\mathrm{BT}_{120}}\\
			& {\mathrm{B_{-}2O}_{96}} &&&& {\mathrm{B_{+}2O}_{96}} \\
			\\
			& {\mathrm{B_{-}TO}_{48}} && {\mathrm{BO}_{48}} && {\mathrm{B_{+}TO}_{48}} \\
			{\mathrm{B_{-}2T}_{48}} &&&&&& {\mathrm{B_{+}2T}_{48}} \\
			&&& { } \\
			&&& { } \\
			&&& {\mathrm{BT}_{24}}
			\arrow[from=9-4, to=5-4]
			\arrow["5", curve={height=-30pt}, from=9-4, to=2-4]
			\arrow[from=9-4, to=5-2]
			\arrow[from=5-4, to=3-2, crossing over]
			\arrow[from=5-2, to=3-2]
			\arrow[from=6-1, to=3-2]
			\arrow[from=9-4, to=6-1]
			\arrow[from=9-4, to=5-6]
			\arrow[from=9-4, to=6-7]
			\arrow[from=2-4, to=1-2]
			\arrow[from=2-4, to=1-6]
			\arrow["5"', curve={height=25pt}, from=6-7, to=1-6]
			\arrow["5", curve={height=-25pt}, from=6-1, to=1-2]
			\arrow[from=5-6, to=3-6]
			\arrow[from=5-4, to=3-6]
			\arrow[from=6-7, to=3-6]
			\arrow["\cong"{description}, no head, from=5-4, to=5-6]
		\end{tikzcd}\]
		\caption{Containments between the finite subgroups of $\Pin_{\pm}(3)$. The index is 2 when not labelled. Groups on the left labelled with the $\mathrm{B_-}$ prefix are in $\Pin_{-}(3)$ and groups on the right with the $\mathrm{B_+}$ are in $\Pin_{+}(3)$.}
		\label{fig:Pin3SubgroupsPolyhedral}
	\end{figure}

	\begin{figure}[p]
		\centering
		\def\svgwidth{\columnwidth}
		\[\begin{tikzcd}[column sep=small]
			{\mathrm{B_{-}2D}_{8n}} && {\mathrm{B_{-}DD}_{8n}} && {\mathrm{B_{+}DD}_{8n}} && {\mathrm{B_{+}2D}_{8n}} \\
			\\
			{\mathrm{B_{-}2C}_{4n}} &&& {\mathrm{BD}_{4n}} &&& {\mathrm{B_{+}2C}_{4n}} \\
			{\mathrm{B_{-}CC}_{4n}} && {\mathrm{B_{-}CD}_{4n}} && {\mathrm{B_{+}CD}_{4n}} && {\mathrm{B_{+}CC}_{4n}} \\
			\\
			&&& { \mathrm{BC}_{2n}} \\
			&&& {C_n}
			\arrow[from=6-4, to=4-5]
			\arrow[shift right=1, from=6-4, to=3-4]
			\arrow[from=6-4, to=4-7]
			\arrow[from=7-4, to=6-4]
			\arrow[from=3-4, to=1-5]
			\arrow[from=3-7, to=1-7]
			\arrow[from=4-5, to=1-7]
			\arrow[from=3-4, to=1-7]
			\arrow[from=6-4, to=3-7]
			\arrow[from=4-7, to=1-5]
			\arrow[from=4-5, to=1-5]
			\arrow[from=4-7, to=3-7, dashed]
			\arrow[from=1-5, to=1-7, Rightarrow, dashed]
			\arrow[from=6-4, to=4-3]
			\arrow[from=6-4, to=4-1]
			\arrow[from=6-4, to=3-1]
			\arrow[from=4-1, to=3-1, dashed]
			\arrow[from=1-3, to=1-1, Rightarrow, dashed]
			\arrow[from=4-3, to=1-3]
			\arrow[from=3-4, to=1-3]
			\arrow[from=4-1, to=1-3]
			\arrow[from=3-1, to=1-1]
			\arrow[from=4-3, to=1-1]
			\arrow[from=3-4, to=1-1]
			\arrow["\cong"{description}, no head, from=3-4, to=4-5]
			\arrow[from=3-4, to=3-4, loop left, Rightarrow, dashed, looseness=7]
			\arrow[from=4-5, to=4-5, loop left, Rightarrow, dashed, looseness=5]
			\arrow[from=4-3, to=4-3, loop right, Leftarrow, dashed, looseness=5]
			\arrow[from=1-1, to=1-1, loop left, Rightarrow, dashed, looseness=5]
			\arrow[from=1-7, to=1-7, loop right, Leftarrow, dashed, looseness=5]
			\arrow[from=3-1, to=3-1, loop left, rightarrow, dashed, looseness=5]
			\arrow[from=3-7, to=3-7, loop right, leftarrow, dashed, looseness=5]
		\end{tikzcd}\]
		\caption{Index 2 containments between the axial subgroups of $\Pin_{\pm}(3)$. Groups on the left labelled with the $\mathrm{B_-}$ prefix are in $\Pin_{-}(3)$ and groups on the right with the $\mathrm{B_+}$ are in $\Pin_{+}(3)$.}
		\label{fig:Pin3SubgroupsAxial}
	\end{figure}

	\pagebreak
	
	\pagebreak

\section{K-Theory}

\subsection{Vector Bundles}
We take a brief detour to establish some basic topological $K$-theory. Using mainly Hatcher \citep{HatcherVBKT} and Atiyah \citep{Atiyah67}.

\begin{defn}\label{defn:VectorBundle}
Let $X$ be a topological space. An \textit{$n$-dimensional vector bundle} over $X$ is a topological space $E$ together with a map $p : E \to X$, where $p^{-1}(x)$ is an $n$-dimensional vector space over a field $k$, such that there is a cover of $X$ by open sets $U_\alpha$ for each of which there exists a homeomorphism $h_\alpha : p^{-1}(U_\alpha) \to U_\alpha \times k^n$, which takes $p^{-1}(x)$ to a $\{x\}\times k^n$ by a vector space isomorphism for each $x\in X$.
\end{defn}
Here, $h_\alpha$ is called a \textit{local trivialisation} of our \textit{total space} $E$. The space $X$ is called the base space and $E_x := p^{-1}(x)$ is the \textit{fibre over $x$}.
If we take our field to be $\C$ or $\R$ which we call our $E$ a \textit{complex vector bundle} or \textit{real vector bundle} respectively.

Let the $\ep^n \to X$ denote the the trivial $n$-dimensional vector bundle. Two vector bundles $E_1, E_2$ over $X$ are defined to be \emph{stably isomorphic}, denoted $E_1 \approx_s E_2$ if and only if $E_1 \oplus \ep^n \approx E_2 \oplus \ep^n$ for some $n$ \citep{HatcherVBKT}.

Consider the set $\mathrm{Vect}(X)$ of stable isomorphism classes of complex vector bundles over $X$. Together with the direct sum, $(\mathrm{Vect}(X),\oplus)$ has the structure of a commutative monoid. That is, it has an additive structure with identity element given by $\ep^0$.

\begin{defn}[{\citep{HahnOmeara}}]\label{defn:GrothendieckGroup}
Given a commutative monoid $M$, the \emph{Grothendieck group} associated to $M$ is a pair $\left(K,i\right)$ consisting of an Abelian group $K$ and a monoid homomorphism $i:M\to K$ with the following universal property:\\
For any Abelian $A$ with a monoid homomorphism $f:M \to A$, there is a unique group homomorphism $g : K \to A$ such that $f = g \circ i$. \\
Equivalently, the diagram
\[\begin{tikzcd}
M & K \\
& A
\arrow["i", from=1-1, to=1-2]
\arrow["f"', from=1-1, to=2-2]
\arrow["g", from=1-2, to=2-2]
\end{tikzcd}\]
commutes.
\end{defn}
An immediate consequence of this universal property is that, if it exists,  the Grothendieck group is unique up to a canonical isomorphism.

Just as the rational numbers are constructed from the integers by forming quotients $\frac{a}{b}$ with an equivalence relation $\frac{a}{b}=\frac{c}{d}$ if and only if $ad=bc$ \citep{HatcherVBKT}, we form the the Grothendieck group $K(\mathrm{Vect}(X))$ which consists of formal differences $E-E'$ of vector bundles $E, E'$ on our space $X$, up to an equivalence relation $E_1-E_1' = E_2-E_2'$ if and only if $E_1 \oplus E_2' \approx_s E_2 \oplus E_1$.

\subsection{Equivariant K-Theory}
Equivariant $K$-theory was invented by Atiyah in 1966, and is used to study vector bundles which respect a group action. This section mainly follows Atiyah and Segal's treatment of equivariant $K$-theory in \citep{AtiyahSegal} and \citep{SegalEqK}, we will use this to construct an isomorphism between the $G$-equivariant $K$-theory and the representation ring $R_\C(G)$, which is constructed as the Grothendieck ring of the finite-dimensional complex representations of $G$.

\begin{defn}
Let $G$ be a group, we define a \emph{ $G$-space} $X$ to be a topological space on which $G$ has a continuous action.
\end{defn}

\pagebreak

\begin{defn}
A $G$-\emph{vector bundle} on a $G$-space $X$ is a $G$-space $E$ together with a projection map $p : E \to X$, such that
\begin{itemize}
\item[i] $E$ is a complex vector bundle $E$ over $X$,
\item[ii] for any $\xi \in E$, $p(g \cdot \xi) = g \cdot p(\xi)$, and
\item[iii] for any $g \in G, x\in X$ the group action $g : E_x \to E_ {gx}$ is a vector space homomorphism.
\end{itemize}
\end{defn}

As in the previous section, stable isomorphism classes of $G$-vector bundles form a monoid with direct sum. Forming the Grothendieck group out of formal differences and noting that this group is closed into tensor product allows us to form the $G$-equivariant $K$-theory $ K_G(X)$ which is a subring of $K(X)$.

Segal introduced the following in \citep{SegalRR}.

\begin{defn}
The \emph{representation ring} of a group $G$ is the Grothendieck group of the monoid of complex representations of $G$. Concretely, it is comprised of formal differences of representations with addition given by direct sum, and is made into a ring with multiplication given by tensor product.
\end{defn}

The following fact is used in many places, and our formulation of it is closest to that of Segal's in \citep{SegalEqK}.

\begin{prop} \label{prop:KGRG}
\begin{equation*}
K_G(\mathrm{pt})\simeq K_G(\C^2)\simeq R_\C(G)
\end{equation*}
\end{prop}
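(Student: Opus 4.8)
The plan is to establish the two isomorphisms in Proposition \ref{prop:KGRG} separately, exploiting the fact that both $\mathrm{pt}$ and $\C^2$ are $G$-equivariantly contractible to a point. First I would prove $K_G(\mathrm{pt}) \simeq R_\C(G)$. The key observation is that a $G$-vector bundle over a single point is simply a finite-dimensional complex vector space carrying a linear $G$-action, which is precisely a finite-dimensional complex representation of $G$. Under this correspondence, direct sum of $G$-bundles matches direct sum of representations, and tensor product matches tensor product; hence the monoid of stable isomorphism classes of $G$-bundles over $\mathrm{pt}$ is isomorphic as a semiring to the monoid of representations. Passing to Grothendieck groups (which is functorial and respects the ring structure coming from tensor product), I obtain a ring isomorphism $K_G(\mathrm{pt}) \simeq R_\C(G)$ directly from the definitions.

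Next I would prove $K_G(\C^2) \simeq K_G(\mathrm{pt})$ using homotopy invariance of equivariant $K$-theory. The finite subgroup $G \subset \SU(2)$ acts linearly on $\C^2$, and the straight-line homotopy $H_t(v) = tv$ for $t \in [0,1]$ gives a $G$-equivariant deformation retraction of $\C^2$ onto the origin, since $H_t(g \cdot v) = t(g\cdot v) = g \cdot (tv) = g \cdot H_t(v)$ by linearity of the action. The inclusion of the fixed origin $\{0\} \hookrightarrow \C^2$ and the projection $\C^2 \to \{0\}$ are therefore mutually inverse $G$-homotopy equivalences. Invoking the homotopy invariance of $K_G$ (a standard property of equivariant $K$-theory established in Atiyah--Segal \citep{AtiyahSegal}, which I may assume), the induced maps on $K$-theory are isomorphisms, giving $K_G(\C^2) \simeq K_G(\{0\}) = K_G(\mathrm{pt})$. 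Composing the two isomorphisms yields the full chain of the proposition.

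The main obstacle, and the only point requiring genuine care, is the homotopy invariance step: one must verify that the deformation retraction is genuinely $G$-equivariant (which the linearity of the $\SU(2)$-action secures) and that the version of homotopy invariance being invoked applies to the non-compact space $\C^2$ rather than only to compact $G$-CW complexes. In the treatment of Atiyah and Segal this is handled through the functoriality of $K_G$ under $G$-homotopy and the observation that $\C^2$, being $G$-equivariantly contractible to a point, has the same equivariant $K$-theory as that point; I would cite this property rather than reprove it. A secondary subtlety worth noting is that the first isomorphism is most naturally a \emph{ring} isomorphism, so I would remark that the comparison respects not only the additive monoid structure but also the tensor product, ensuring the identification $K_G(\mathrm{pt}) \simeq R_\C(G)$ is compatible with the ring structures used later in the McKay correspondence.
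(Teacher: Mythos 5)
Your proposal is correct and follows essentially the same route as the paper: the first isomorphism by identifying $G$-vector bundles over a point with finite-dimensional representations of $G$, and the second by homotopy invariance of $K_G$ applied to the $G$-equivariant contraction of $\C^2$ to the origin (the paper defers this step to Thomason rather than Atiyah--Segal, but the substance is the same). Your additional care about equivariance of the linear retraction and about homotopy invariance on the non-compact space $\C^2$ is a reasonable elaboration of what the paper leaves implicit.
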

\begin{proof}
When the topological space is just a point, a $G$-vector bundle is just a $\C[G]$-module, and so the two rings coincide. The equivalence to that of $K_G(\C^2)$ uses homotopy invariance of the $K(\cdot)$ functor as a cohomology theory. For this detail, we defer to Thomason \citep[Theorem~4.1]{Thomason}.
\end{proof}

\subsection{Algebraic K-theory}\label{Sec:AlgK}

As we wish to work with algebraic varieties and schemes we need to recreate the topological work work in the context of algebraic $K$-theory. Much of our construction here is from Weibel's book \citep{Weibel}. We omit basic scheme and sheaf theoretic definitions and refer to \citep{Hartshorne}.

To do this, we work with sheaves of modules over schemes, this has many features in common with the topological vector bundles that we considered previously.

\begin{defn}
Given $X$ a scheme, a sheaf of $\mathcal{O}_X$-modules $\mathcal{F}$ is \emph{quasicoherent} if and only if $X$ can be covered by affine opens $U_i = \Spec(R_i)$ such that $\left.\mathcal{F}\right|_{U_i}$ is $\tilde{M_i}$  for an $R$-module $M_i$. Moreover, $\mathcal{F}$ is \emph{coherent} if and only if each $M_i$ is a finitely generated $R$-module.
\end{defn}
For our purposes, we only need to work with affine schemes $X$, so  $\mathcal{F}=\tilde{M}$ is coherent if and only if $M$ is a finitely generated $R$-module \citep{Weibel}. Working explicitly over $\C$, a coherent sheaf is exactly a $\C[X]$-module where $\C[X]$ is the coordinate ring of $X$.

We can view such sheaves of $\mathcal{O}_X$-modules as sections of the \emph{\'etal\'e space}.

\begin{defn}
Given a sheaf $\mathcal{F}$ over a scheme $X$, the \emph{\'etal\'e space} of $\mathcal{F}$ is a topological space $E$ together with a local homeomorphism $\pi :E\to X$ such that the sheaf of sections $\Gamma (\pi ,-)$ of $\pi$ is $\mathcal{F}$. We can concretely construct $E$ to be the disjoint union of the stalks $\mathcal{F}_x$ with a topology based by $\left\{s_x\; \mid \;U \subset X \text{ open}, s \in \mathcal{F}(U)\right\}$.
\end{defn}

The following algebraic definition of a vector bundle is from \citep[p.~39]{Weibel}.

\begin{defn}
An \emph{algebraic vector bundle} over a ringed space $(X, \mathcal{O}_X)$ is a locally free $\mathcal{O}_X$ module whose rank is finite at every point. We write $\mathrm{Vect}(X,\mathcal{O}_X)$ for the set of vector bundles on $(X,\mathcal{O}_X)$.
\end{defn}

\begin{eg}[{\citep[Example~5.1.1.]{Weibel}}]
Note that for a topological space $X$, we can form a locally ringed space $X_{top}:=(X,\mathcal{O}_{top})$, where $\mathcal{O}_{top}$ is the sheaf of $\C$-valued continuous functions on $X$. Then we have an equivalence between $\mathrm{Vect}(X_{top})$ and our topological vector bundles $\mathrm{Vect}(X)$ from section 3.
\end{eg}

\begin{lem}[{\citep[Lemma~5.1.3]{Weibel}}]
For a scheme $X$, an $\mathcal{O}_X$-module $\mathcal{F}$ is a vector bundle if and only if $\mathcal{F}$ is coherent and the stalks $\mathcal{F}_x$ are free $\mathcal{O}_{X,x}$-modules.
\end{lem}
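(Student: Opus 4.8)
The plan is to prove both directions, of which the forward implication is immediate and the converse carries the content. Since being a vector bundle and being coherent are both local on $X$, I would first reduce to the affine case: it suffices to exhibit $\mathcal F$ as free of finite rank on a neighbourhood of each point $x\in X$, so I may replace $X$ by an affine open $\Spec R$ containing $x$ and write $\mathcal F=\tilde M$ for a finitely generated $R$-module $M$, with $x$ corresponding to a prime $\mathfrak p$, so that $\mathcal{O}_{X,x}=R_{\mathfrak p}$ and $\mathcal F_x=M_{\mathfrak p}$. In this language the easy direction is clear: if $\mathcal F$ is locally free of finite rank then near every point it is isomorphic to $\mathcal{O}_X^{\,n}=\widetilde{R^n}$, which is finitely generated (hence coherent in the affine sense used here) and whose stalk $\mathcal{O}_{X,x}^{\,n}$ is manifestly free.

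For the converse, fix $x$ and let $n$ be the rank of the free $\mathcal{O}_{X,x}$-module $\mathcal F_x$, which is well defined since a nonzero commutative ring has invariant basis number. I would choose a basis $\bar e_1,\dots,\bar e_n$ of $\mathcal F_x$, realise each $\bar e_i$ as the germ of a section $e_i\in\mathcal F(U)$ over a common affine neighbourhood $U$ of $x$, and assemble these into a morphism $\phi:\mathcal{O}_U^{\,n}\to\mathcal F|_U$ carrying the standard basis to the $e_i$. By construction the stalk map $\phi_x:\mathcal{O}_{X,x}^{\,n}\to\mathcal F_x$ sends a basis to a basis, hence is an isomorphism, so $\ker\phi_x=0$ and $\operatorname{coker}\phi_x=0$. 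It remains to spread this isomorphism over a whole neighbourhood, for which I would invoke twice the principle that a sheaf of finite type has closed support: for a finitely generated $R$-module $N$ one has $\operatorname{Supp}N=V(\operatorname{Ann}N)$, so $N_{\mathfrak p}=0$ forces $N$ to vanish on a basic open $D(f)\ni\mathfrak p$. Applied to $\operatorname{coker}\phi$, which is of finite type as a quotient of $\mathcal F|_U$, this yields surjectivity of $\phi$ after shrinking $U$; applied to $\ker\phi$ it yields injectivity. Then $\phi$ is an isomorphism near $x$ and $\mathcal F$ is free of rank $n$ there; as $x$ was arbitrary, $\mathcal F$ is locally free of finite rank, i.e.\ a vector bundle.

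The main obstacle is the injectivity step, and beneath it the assertion that $\ker\phi$ is of finite type. This is exactly where the genuine coherence of $\mathcal F$ is needed rather than mere finite generation of $M$: the defining feature of a coherent sheaf is that the kernel of every morphism $\mathcal{O}_U^{\,n}\to\mathcal F|_U$ from a finite free sheaf is again of finite type, and over a non-Noetherian $R$ this does not follow from $M$ being finitely generated alone. Granting that, the closed-support argument (an elementary consequence of Nakayama's lemma) settles injectivity just as it did surjectivity. The only remaining points are routine: lifting a stalk basis to honest sections over a single neighbourhood, and observing that the rank is locally constant because $\phi$ is an isomorphism on an open set.
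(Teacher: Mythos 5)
The paper does not prove this lemma at all---it is quoted verbatim from Weibel and used as a black box---so there is no in-paper argument to compare against. Your proof is correct and is the standard one (it is essentially the argument in Weibel and in Hartshorne, Ex.~II.5.7): lift a basis of the free stalk to sections near $x$, form $\phi:\mathcal{O}_U^{\,n}\to\mathcal{F}|_U$, kill $\operatorname{coker}\phi$ on a neighbourhood because it is of finite type with closed support, and kill $\ker\phi$ the same way once coherence guarantees it is of finite type. You have also correctly isolated the one genuinely delicate point: the injectivity step needs coherence in the strong sense (kernels of maps $\mathcal{O}_U^{\,n}\to\mathcal{F}|_U$ are of finite type), not merely finite generation of the modules $M_i$. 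Note that the paper's own definition of ``coherent'' only requires each $M_i$ to be finitely generated, under which the lemma as literally stated can fail for non-Noetherian schemes; your proof implicitly corrects this, and it would be worth making that discrepancy explicit. Two cosmetic remarks: the closed-support fact $\operatorname{Supp}N=V(\operatorname{Ann}N)$ for finitely generated $N$ is just clearing denominators rather than a consequence of Nakayama, and in the forward direction $\mathcal{O}_U^{\,n}$ is coherent in the strong sense only when $\mathcal{O}_X$ is a coherent ring, though it is always coherent in the paper's weaker sense, which is what is being asserted there.
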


Weibel constructs the $K$-theory $K(X)$ of a scheme $X$ in \citep[p.~61]{Weibel} to be the Grothendieck group of algebraic vector bundles. One similarly generalises the previous section to obtain $G$-equivariant vector bundles in this new sense. These generalisations will be used in the following section.

\section{McKay Correspondence as an Equivalence of K-Theories}
The correspondence, observed by McKay in 1980 \citep{McKay80}, is a one-to-one correspondence between the representation theory finite subgroups of $\SU(2)$ and the simply-laced extended Dynkin diagrams arising from the corresponding quotient singularities. See \citep{IVD} or \citep{RS}.

Where $X=\C^2$, the minimal resolution $\widetilde{X \git G}$ has an exceptional divisor (the preimage of the singular point) comprised of a union of complex projective lines. Dependent on the group $G$, the dual graph of the configuration of lines is exactly a Dynkin diagram of type $A_n$, $D_n$, $E_6$, $E_7$, $E_8$. On the other hand, by constructing the \textit{McKay graph} whose vertices are irreducible representations of $G$, we get the extended Dynkin diagrams of the corresponding types. This relates the geometry of Kleinian singularity to the representation theory of the group itself.

We give an overview of the work by Gonzalez-Springberg and Verdier \citep{GSV}, where they demonstrated the connection between the geometric and representation theoretic views of the McKay correspondence. Slodowy also gives a useful overview of this argument in \citep{Slodowy}.

\pagebreak

\begin{defn}\label{def:fibreproduct}
	Given topological spaces $X,Y,Z$ and homeomorphisms \\ $f:X\to Z,g:Y\to Z$ we define the \emph{fibre product} or \emph{pullback} to be
	\begin{equation*}
		X \times_Z Y := \left\{(x,y)\subset X\times Y \mid f(x)=g(x) \right\}
	\end{equation*}
\end{defn}
This can also be defined for more general categories via a universal property, but this concrete construction will suffice for our purposes.

Our finite groups $G\subset \SU(2) \subset \GL(2,\C)$ acts naturally on $X=\C^2$. We would like to construct a topological space out of these $G$-orbits. However naively taking what is known as the \emph{geometric quotient} to be the set $ X / G := \left\{G\cdot x \mid x \in X\right\}$ might not be "good" when you consider the closure of the orbits (with respect to the Zariski topology when considering algebraic varieties). To respect the coordinate rings of algebraic varieties or schemes, we introduce the geometric invariant theory quotient below.

\begin{defn}\label{def:GITquotient}
	For a reductive group $G$ with an action on an affine variety $X$, there is inherited action on the coordinate ring $k[X]$. The \emph{GIT quotient} of a reductive group action $G$ on an affine variety $X$ is defined by $X \git G := \Spec(k[X]^G)$, where $k[X]^G$ is ring of $G$-invariants.
\end{defn}

It can be shown (see \citep{NewsteadGIT} or \citep{MumfordGIT}) that the GIT quotient is a categorical quotient. This implies that when the geometric quotient respects the category of varieties, the two quotients coincide.

\begin{eg}
	Let $n \in \N$ and $G = \mathrm{BD}_{4n} \subset \SU(2)$ be the binary dihedral group. This is the cyclic extension of the dihedral group of order $2n$, and so has group  presentation
	\begin{equation*}
		\mathrm{BD}_{4n}=\langle a, b \mid a^{n} = b^4=1, bab^{-1}=a^{-1} \rangle.
	\end{equation*}
	Its natural presentation in $\SU(2)$ can be given by a matrix group with generators
	\begin{equation*}
		G=\langle A, B\rangle, \quad \text{where }
		A=
		\begin{pmatrix}
			\ep & 0 \\
			0 & \ep^{-1}
		\end{pmatrix}, \,
		B=
		\begin{pmatrix}
			0 & 1 \\
			-1 & 0
		\end{pmatrix}
	\end{equation*}
	where $\ep$ is a primitive $2n$-th root of unity.
	
	The coordinate ring of affine space $\C^2$ is $\C[u,v]$. The process of computing the generators and relations for the ring of invariant polynomials can be found in \citep{FK}, \citep{ReidDuVal}, \citep{JCS}, \citep{FineganMcKay} or \citep{joncheahSU2}. This allows us to write
	\begin{align*}
		\C[u,v]^G&=\C[uv\left(u^{2n}-v^{2n}\right),u^{2n}+v^{2n}, (uv)^2],\\
		&=\C[x,y,z]/\left(x^2+y^2z +z^n\right).
	\end{align*}
	up to some rescaling of the generators $x,y,z$, see \citep[p.~13]{FineganMcKay} for the explicit scalars.
	By the above construction, our orbit quotient is then
	\begin{equation*}
		\C^2 \git G = \Spec\left(\C[u,v]^G\right) = \Spec\left(\C[x,y,z]/\left(x^2+y^2z +z^n\right)\right).
	\end{equation*}

\end{eg}
The quotients formed by such $G\subset\SU(2)$ are known as \emph{Kleinian singularities}, \emph{Du Val singularities}, or \emph{a rational double point} \citep{RS}. We embed this  variety as a surface in $\mathbb{A}^3_\C$ with an isolated singularity at the origin. We need a way to resolve this singularity, i.e. find another algebraic variety (or scheme) which is birational to our Kleinian singularity. This can be done via blow-ups (for a detailed exploration of the Kleinian singularities and their resolutions via blow-ups see \citep{JJGreen}) or via other means. For our current purposes we use the $G$-Hilbert scheme as introduced by Ito and Nakamura in \citep{ItoNakamura99}.

\begin{defn}
	For a group $G$ with an action on $\C[u,v]$, define the \emph{$G$-Hilbert Scheme} to be
	
	\begin{equation*}
		\GHilb(\C^2)=\left\{I\ \unlhd \, \C[u,v] \mid G \cdot I = I, \; \C[u,v]/I \cong \C[G] \right\}.
	\end{equation*}
\end{defn}

Ito and Nakamura proved the $G$-Hilbert scheme to be a minimal resolution of $\C^2 \git G$ for finite subgroups $G\subset \SL(2,\C)$. That is to say, for any other resolution of the Kleinian singularity, the birational maps factor through $\GHilb(\C^2)$. Their proof \citep[p.~187]{ItoNakamura99} was a general argument, independent of the classification of the finite subgroups, and their results were expanded upon by other authors such as \citep{Ishii02} who showed that the $G$-Hilbert scheme is a minimal resolution of $\C^2 \git G$ for a small finite subgroup $G\subset \GL(2,\C)$.

\begin{eg}
	The concrete case of the binary dihedral group in the case of $n=2$ is explored in \citep{ReidDuVal} and \citep{RS}. In fact, this group is isomorphic to the quaternion group $\mathrm{Q}_8 = \langle i,j,k \mid i^2=j^2=k^2=ijk=-1  \rangle $, where $-1$ is a central element of order $2$. Here, the real resolution of the real variety $\C^3/(x^2+y^2z +z^3) \cap \R^3$ shown in Figure \ref{fig:BD8} gives a fairly faithful picture of what is happening. The exceptional divisor, or the preimage of the origin under the desingualarisation, is comprised of a union of projective lines whose configuration gives the Dynkin diagram $D_4$.

	\begin{figure}[h]
		\centering
		\def\svgwidth{0.8\columnwidth}
\begingroup%
  \makeatletter%
  \providecommand\color[2][]{%
    \errmessage{(Inkscape) Color is used for the text in Inkscape, but the package 'color.sty' is not loaded}%
    \renewcommand\color[2][]{}%
  }%
  \providecommand\transparent[1]{%
    \errmessage{(Inkscape) Transparency is used (non-zero) for the text in Inkscape, but the package 'transparent.sty' is not loaded}%
    \renewcommand\transparent[1]{}%
  }%
  \providecommand\rotatebox[2]{#2}%
  \newcommand*\fsize{\dimexpr\f@size pt\relax}%
  \newcommand*\lineheight[1]{\fontsize{\fsize}{#1\fsize}\selectfont}%
  \ifx\svgwidth\undefined%
    \setlength{\unitlength}{476.87178019bp}%
    \ifx\svgscale\undefined%
      \relax%
    \else%
      \setlength{\unitlength}{\unitlength * \real{\svgscale}}%
    \fi%
  \else%
    \setlength{\unitlength}{\svgwidth}%
  \fi%
  \global\let\svgwidth\undefined%
  \global\let\svgscale\undefined%
  \makeatother%
  \begin{picture}(1,0.68661499)%
    \lineheight{1}%
    \setlength\tabcolsep{0pt}%
    \put(0,0){\includegraphics[width=\unitlength,page=1]{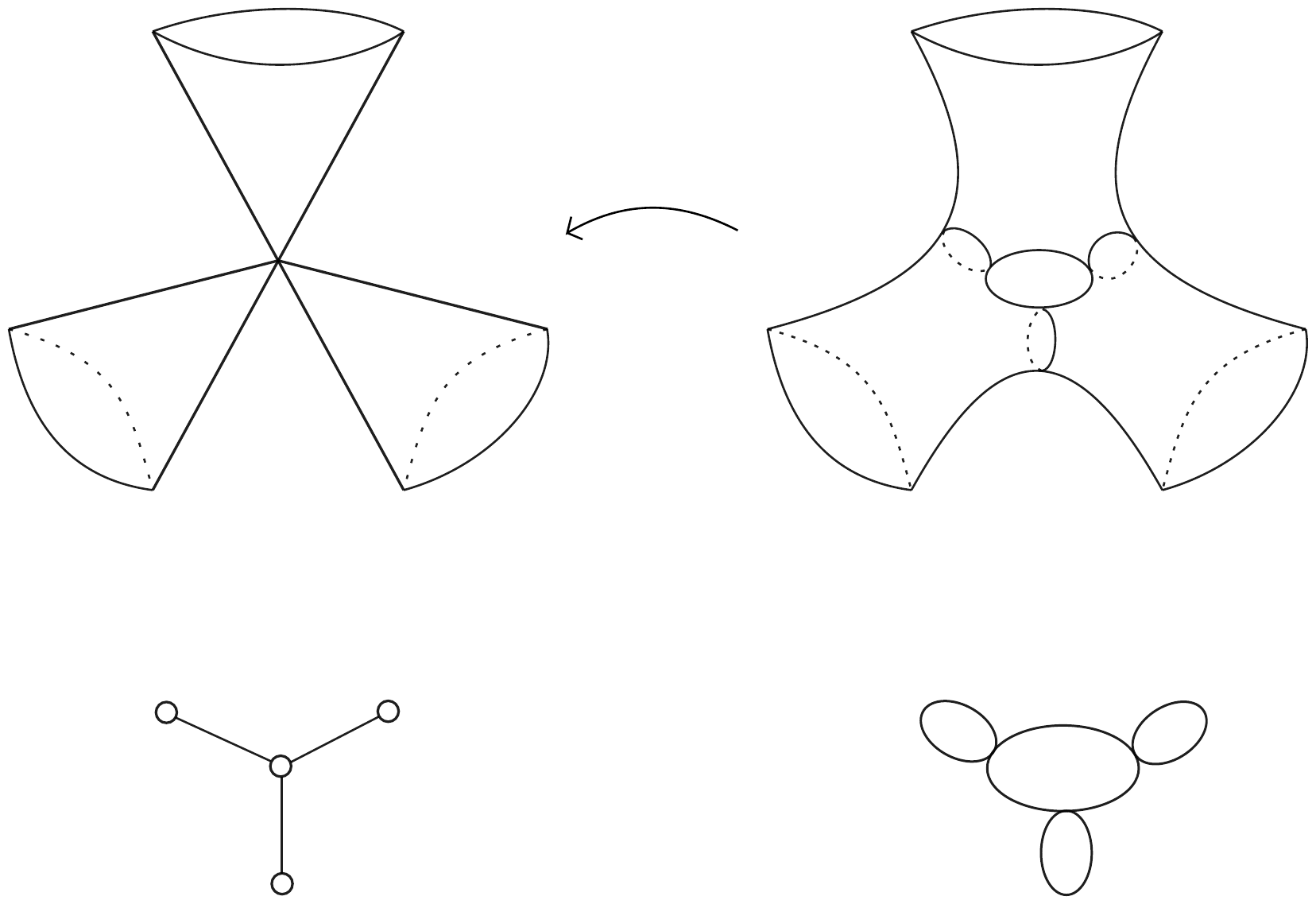}}%
    \put(0.4644527,0.53571793){\color[rgb]{0,0,0}\makebox(0,0)[lt]{\lineheight{1.25}\smash{\begin{tabular}[t]{l}$\pi$\end{tabular}}}}%
    \put(0.14805964,0.4853408){\color[rgb]{0,0,0}\makebox(0,0)[lt]{\lineheight{1.25}\smash{\begin{tabular}[t]{l}$O$\end{tabular}}}}%
    \put(0.88,0.51052935){\color[rgb]{0,0,0}\makebox(0,0)[lt]{\lineheight{1.25}\smash{\begin{tabular}[t]{l}$\pi^{-1}(O)$\end{tabular}}}}%
    \put(0,0){\includegraphics[width=\unitlength,page=2]{drawing.pdf}}%
    \put(0.25717356,0.01390129){\color[rgb]{0,0,0}\makebox(0,0)[lt]{\lineheight{1.25}\smash{\begin{tabular}[t]{l}$D_4$\end{tabular}}}}%
  \end{picture}%
\endgroup%

		\caption{The real picture when $G=\mathrm{BD}_{8}$, recreated from \citep[Fig.~A.1]{RS}.}
		\label{fig:BD8}
	\end{figure}
\end{eg}

\pagebreak

To view this correspondence as Gonzalez-Springberg and Verdier did, we construct a diagram of spaces as below. Here, $X=\C^2$ and $G\subset \SU(2)$ is a finite subgroup, the maps $p_1,p_2$ are the natural projection from the fibre product, and $\widetilde{X \git G}$ is the minimal resolution of the singularity in $X\git G$.

\[\begin{tikzcd}[column sep=tiny]
	& {X\times_{X \git G} \widetilde{X\git G}} \\
	X && {\widetilde{X\git G}}
	\arrow["{p_2}", from=1-2, to=2-3]
	\arrow["{p_1}"', from=1-2, to=2-1]
\end{tikzcd}\]

\begin{thm}[The McKay Correspondence, \citep{GSV}]
	
	For any finite subgroup $G\subset \SU(2)$, there is an isomorphism between the representation ring $R_\C(G)$ and the $G$-equivariant $K$-theory of $X=\C^2$. Additionally the projection maps from the fibre product space induce an isomorphism between ${K}_G(X)$ and $K\left(\widetilde{X \git G} \right)$.
	\[\begin{tikzcd}[column sep=tiny]
		&& {K\left(X\times_{X \git G} \widetilde{X\git G}\right)} \\
		{R_\C(G)} & {K_G(X)} && {K\left(\widetilde{X \git G}\right)}
		\arrow["\simeq"{marking}, draw=none, from=2-1, to=2-2]
		\arrow["{p_1^*}", from=2-2, to=1-3]
		\arrow["{\mathrm{Inv}\circ{p_2}_*}", from=1-3, to=2-4]
		\arrow["\simeq", from=2-2, to=2-4]
	\end{tikzcd}\]
\end{thm}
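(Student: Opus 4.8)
The first isomorphism $R_\C(G)\simeq K_G(X)$ is precisely Proposition~\ref{prop:KGRG}, so the work lies in producing the second isomorphism and identifying it with the composite $\Phi:=\mathrm{Inv}\circ(p_2)_*\circ p_1^*$ along the correspondence $Z:=X\times_{X\git G}\widetilde{X\git G}$. First I would record the equivariant bookkeeping: $G$ acts on $Z$ through its action on the factor $X$ and trivially on $\widetilde{X\git G}$, because the resolution map $\widetilde{X\git G}\to X\git G$ is $G$-invariant. Hence $p_1$ is $G$-equivariant while $p_2$ is $G$-invariant; moreover $p_2$ is finite, being a base change of the finite morphism $X\to X\git G$, so $(p_2)_*$ is exact and preserves coherence, and on the smooth surface $\widetilde{X\git G}$ we may pass freely between vector bundles and coherent sheaves. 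Thus for a $G$-equivariant bundle $\mathcal F$ on $X$ the sheaf $(p_2)_*p_1^*\mathcal F$ is $G$-equivariant over a trivial base action, and taking invariants $\mathrm{Inv}$ returns an honest class in $K(\widetilde{X\git G})$. This makes $\Phi$ a well-defined additive map, in fact a ring homomorphism by the projection formula together with monoidality of $p_1^*$.

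Next I would identify the images $\mathcal R_i := \Phi(\rho_i)$ of the distinguished basis $\mathrm{Irr}(G)$; these are the Gonzalez-Sprinberg--Verdier \emph{tautological bundles}. Writing $\underline\rho_i$ for the equivariant bundle $X\times\rho_i$ on $X$, we have $\mathcal R_i=\mathrm{Inv}\,(p_2)_* p_1^*\underline\rho_i$. Using the identification $\widetilde{X\git G}=\GHilb(\C^2)$ from Ito--Nakamura, these are exactly the isotypic pieces of the push-forward of the universal cluster: if $\mathcal Z\subseteq Z$ denotes the universal $G$-cluster, then
\begin{equation*}
  (p_2)_*\mathcal O_{\mathcal Z}\;\cong\;\bigoplus_i \mathcal R_i\otimes\rho_i,\qquad \operatorname{rank}\mathcal R_i=\dim\rho_i,
\end{equation*}
the fibrewise $G$-module being the regular representation $\C[G]$. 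In particular $\Phi$ sends the regular representation to $\mathcal O_{\widetilde{X\git G}}$. The problem thus reduces to showing that $\{\mathcal R_i\}_i$ is a $\Z$-basis of $K(\widetilde{X\git G})$.

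To conclude I would match ranks and then verify unimodularity of the transition matrix. Both groups are free abelian of rank $|\mathrm{Irr}(G)|$: on the source this is immediate from $K_G(X)\cong R_\C(G)$, while $\widetilde{X\git G}$ deformation retracts onto its exceptional divisor---a configuration of $\proj^1$'s whose dual graph is the Dynkin diagram of type $A$, $D$ or $E$---so that $K(\widetilde{X\git G})\cong\Z\oplus\mathrm{Pic}(\widetilde{X\git G})\cong\Z^{1+r}$ with $r$ the number of exceptional components, and $1+r=|\mathrm{Irr}(G)|$ by the McKay count. Taking the standard basis of $K(\widetilde{X\git G})$ given by $\mathcal O_{\widetilde{X\git G}}$ together with the line-bundle classes dual to the exceptional curves $E_j$, it then suffices to show the matrix expressing $\{\mathcal R_i\}$ in this basis is unimodular. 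I would obtain this from the duality relation $\deg\bigl(c_1(\mathcal R_i)|_{E_j}\bigr)=\delta_{ij}$ for the nontrivial $\rho_i$ (with $\mathcal R_{\text{triv}}=\mathcal O$), which identifies the $c_1(\mathcal R_i)$ with the basis of $\mathrm{Pic}(\widetilde{X\git G})$ dual to the $[E_j]$ under the negative-definite intersection form.

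The crux---and the step I expect to be the main obstacle---is precisely this duality $\deg(c_1(\mathcal R_i)|_{E_j})=\delta_{ij}$, since it is here that the representation theory and the geometry are forced to agree: the tensor-product multiplicities $m_{ij}$ of Definition~\ref{def:McKayGraph} must reproduce the incidence data of the exceptional curves, that is, the off-diagonal entries of the (extended) Cartan matrix of the Dynkin diagram. I would establish it by a local computation of the tautological bundles along the exceptional fibre: either via the explicit description of the fibres of $\GHilb(\C^2)$ over the singular point, reducing the claim to a restriction calculation on each $\proj^1$ in the chain, or by following the Riemann--Roch and projection-formula argument of \citep{GSV} (surveyed in \citep{Slodowy}). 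Everything else---functoriality of $\Phi$, the rank count, and the reduction to a single intersection pairing---is formal once this geometric input is in hand.
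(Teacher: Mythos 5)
Your outline is a faithful reconstruction of the Gonzalez-Sprinberg--Verdier argument, which is exactly what the paper relies on: the paper itself gives no proof of this theorem, presenting it as a citation of \citep{GSV} and remarking only that the explicit isomorphisms use the fundamental cycle. Your identification of the crux --- the duality $\deg\bigl(c_1(\mathcal{R}_i)|_{E_j}\bigr)=\delta_{ij}$ between the tautological bundles and the exceptional curves --- is precisely where the substance of \citep{GSV} lies (and is where the fundamental cycle enters), while the surrounding steps you list (Proposition~\ref{prop:KGRG} for the first isomorphism, freeness of both $K$-groups, the rank count $1+r=|\mathrm{Irr}(G)|$, unimodularity of the transition matrix) are the correct formal scaffolding. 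One caveat worth flagging: $\Phi=\mathrm{Inv}\circ(p_2)_*\circ p_1^*$ is not a ring homomorphism --- the projection formula only exhibits $(p_2)_*p_1^*$ as a homomorphism of $K\bigl(\widetilde{X\git G}\bigr)$-modules, since pushforward does not commute with tensor product --- and the GSV correspondence is an isomorphism of abelian groups rather than of rings. This does not damage your argument, which only uses additivity and the basis comparison, but the phrase ``ring homomorphism by the projection formula'' should be weakened accordingly.
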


The explicit isomorphisms $R_\C(G) \simeq K_G(X)$ and $K_G(X) \simeq K\left(\widetilde{X \git G}\right)$ in \citep{GSV} use the fundamental cycle, a definition of which can be found in \citep{MA}.

\pagebreak
\section{Calculating Real Frobenius-Schur Indicators}

A consequence of Schur's lemma is that the endomorphism ring commuting with the group action in a representation is a real associative division algebra. Frobenius's theorem then gives that this can only be isomorphic to the real numbers $\R$, complex numbers $\C$, or quaternions $\mathbb{H}$. The Frobenius-Schur indicator is used to distinguish between these cases.

\begin{defn}\label{defn:FSindicators}
	For a fixed group $G$ the \emph{Frobenius-Schur indicator} of a $\C G$-module $V$ is
	\begin{equation*}
		\mathcal{F}_\C (V)=\mathcal{F}_\C (\chi):= \frac{1}{\left|G\right|}\sum_{g\in G}\chi(g^2)
	\end{equation*}
	where $\chi$ is the character of $V$.
\end{defn}
For characters of irreducible representations $\mathcal{F}_\C =1,0,$ or $-1$, corresponding to the cases $\R$, $\C$, and $\mathbb{H}$ respectively.

We will be looking at $C_2$ graded subgroups, and introduce more general representations of  $G\lhd\widehat{G}$ used in \citep{AtiyahSegal} and \citep{JTDR}.

\begin{defn}
	A \emph{Real representation} or \emph{antilinear representation} of $C_2$ graded groups $G\lhd \widehat{G}$ is a homomorphism $\rho : \widehat{G} \to \GL(V)$ of $C_2$ graded groups. Equivalently, it is a representation of $G$ with an antilinear involution which respects the conjugation action of $\widehat{G}$ on $G$.
\end{defn}

Given $G\lhd \widehat{G}$, Taylor \citep{JTDR} studies the corresponding \emph{antilinear block} or \emph{$A$-block}, which is the block of $\R$ algebras inclusions
\[\begin{tikzcd}
	{\mathcal{A} := f\R G} & {\mathcal{C} := f\C G} \\
	{\mathcal{B} := f\R \widehat{G}} & {\mathcal{D} := f\C\mkern-3mu *\mkern-3mu \widehat{G}}
	\arrow[hook, from=1-1, to=2-1]
	\arrow[hook, from=2-1, to=2-2]
	\arrow[hook, from=1-1, to=1-2]
	\arrow[hook, from=1-2, to=2-2]
\end{tikzcd}\]
where $f$ is one of two central idempotents based on the conjugation action of $\widehat{G}$ on $G$. Here $\C\mkern-3mu *\mkern-3mu \widehat{G}$ is the skew group algebra with basis $\widehat{G}$ and multiplication $ag \cdot bh = a\pi(g)(b)gh$, where $\pi : \widehat{G} \to \widehat{G} / G \cong C_2$ is the quotient map. Each of these algebras is isomorphic to one of $\R$, $\C$, $\mathbb{H}$ and he uses Dyson's theorem \citep{Dyson} to classify the 10 possible structures the $A$-block can take.

\begin{defn}
	For $C_2$ graded groups $G\lhd \widehat{G}$, we further define the \emph{Real Frobenius-Schur indicator} of a $\C G$-module $V$ to be
	\begin{equation*}
		\mathcal{F}_\bold{R}(V)=\mathcal{F}_\bold{R}(\chi):= \frac{1}{\left|G\right|}\sum_{g\in \widehat{G} \setminus G}\chi(g^2)
	\end{equation*}
	
\end{defn}
This variant of the Frobenius Schur indicator is described in \citep{Dyson} and \citep{GM}. We restate the Bargmann-Frobenius-Schur Condition which Taylor proves in \citep{JTDR} using his classification of the possible $A$-blocks and calculating the Frobenius Schur indicator values for each of the 10 possible $A$-blocks.  

\begin{thm}[Bargmann-Frobenius-Schur Condition]
	Let $\chi$ be the character be a simple $\C G$-module $V$. If $W$ is a simple  $\C \! \ast \! \widehat{G}$-module in the same $A$-block as $V$, then
	\begin{equation*}
		\mathcal{F}_\bold{R}(\chi) = \begin{cases*}
			\; \, 1 &\text{ if $W$ is of type $\R$,}\\
			\; \, 0 &\text{ if $W$ is of type $\C$,}\\
			-1 &\text{ if $W$ is of type $\mathbb{H}$.}
		\end{cases*}
	\end{equation*}
\end{thm}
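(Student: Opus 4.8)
The plan is to reduce to a single $A$-block, rewrite $\mathcal{F}_\bold{R}(\chi)$ as a twisted Frobenius--Schur sum over $G$, and then verify the asserted value in each of the ten block structures furnished by Dyson's theorem. Since $f$ is central and $V$ lies in the block cut out by $f$, each term $\chi(g^2)$ with $g\in\widehat{G}\setminus G$ only sees the $f$-component, so the entire computation takes place inside the single $A$-block containing $V$. The type of $W$ is, by definition, the division-algebra type of the corner $\mathcal{D}=f\C\!\ast\!\widehat{G}$ among $\R$, $\C$, $\mathbb{H}$, so the object to be matched at the end of the calculation is precisely the structure of that fourth corner.

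First I would fix a coset representative $t\in\widehat{G}\setminus G$, so that $\widehat{G}\setminus G=Gt$ and every odd element is $gt$ for a unique $g\in G$. Writing $\sigma$ for the automorphism $g\mapsto g^\sigma:=tgt^{-1}$ of $G$, one has $(gt)^2=g\,g^\sigma\,t^2$, and since $g,g^\sigma,t^2\in G$ this yields
\begin{equation*}
	\mathcal{F}_\bold{R}(\chi)=\frac{1}{|G|}\sum_{g\in G}\chi\!\left(g\,g^\sigma\,t^2\right).
\end{equation*}
This is a twisted analogue of the ordinary indicator. Where $\mathcal{F}_\C(\chi)$ detects a $G$-invariant bilinear form on $V$ (symmetric, absent, or alternating according to the value $+1,0,-1$, and thereby the type of $\mathcal{C}$), the twisted sum instead detects an antilinear intertwiner $J$ implementing the odd part of the $\widehat{G}$-action, with $J^2=\rho(t^2)$. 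The value of $\mathcal{F}_\bold{R}(\chi)$ records whether such a $J$ exists and, when it does, the sign by which it squares on each $\mathcal{C}$-isotypic piece; this is exactly the data distinguishing the division-algebra type of $\mathcal{D}$, hence of $W$.

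To convert this dictionary into a proof I would invoke Dyson's classification directly. Dyson's theorem lists the ten admissible quadruples $(\mathcal{A},\mathcal{B},\mathcal{C},\mathcal{D})$ of division-algebra types. For each row, Clifford theory for the index-two inclusion $G\lhd\widehat{G}$ — whether $V^\sigma\cong V$, and whether $V$ extends to $\widehat{G}$ or is induced from $G$ — together with the ordinary type of $V$ read off by $\mathcal{F}_\C$, pins down how $\sigma$ permutes the irreducible constituents, the scalar by which $t^2$ acts on each, and the sign of $J^2$. Feeding these data into the twisted character sum collapses $\tfrac{1}{|G|}\sum_g\chi(g\,g^\sigma\,t^2)$ to a single integer, which I would check, row by row, equals $+1$, $0$, or $-1$ exactly when $\mathcal{D}$ is $\R$, $\C$, or $\mathbb{H}$. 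Since Dyson's list is exhaustive, verifying the ten rows establishes the theorem.

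The hard part will be the bookkeeping in the mixed rows: those where $V$ is of complex type over $G$ (so $\mathcal{C}\cong\C$) and conjugation by $t$ fuses a pair of $G$-irreducibles, and the quaternionic rows where $J^2=-\mathbbm{1}$. In these cases $\mathcal{F}_\bold{R}(\chi)$ is genuinely \emph{not} the ordinary indicator of $V$, and one must track carefully how $\sigma$ acts on constituents and how $t^2$ acts by a scalar, so as to fix the sign of $J^2$ correctly. Determining that sign in precisely these rows, rather than the routine evaluation of the character sum, is where the real work lies; the remaining rows follow from the standard Frobenius--Schur computation transported through the chosen coset representative $t$.
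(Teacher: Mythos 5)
Your proposal follows essentially the same route as the paper, which does not prove the theorem itself but defers to Taylor's argument: classify the ten admissible $A$-block structures via Dyson's theorem and verify the value of the twisted indicator $\frac{1}{|G|}\sum_{g\in G}\chi(g\,g^\sigma t^2)$ case by case. Your reduction to a coset representative $t$, the identification of the indicator with the sign of $J^2$ for the antilinear intertwiner, and the row-by-row check against Dyson's list is precisely that strategy, so the plan is sound and matches the cited proof.
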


As a result, we have the following corollary.

\begin{cor}
	Together, the pair of Frobenius-Schur indicators $\left(\mathcal{F}_{\C}(\chi), \mathcal{F}_\bold{R}(\chi)\right)$ nearly distinguish the $A$-type of an irreducible $\C G$-module $V$. More explicitly, it distinguishes eight of the types, and leaves ambiguous the case of IV/VII. This may be resolved by considering the conjugation action of $\widehat{G}$ on $G$, and the number of corresponding $\C G$-modules of a given dimension.
	
	\renewcommand{\arraystretch}{1.15}
	\begin{table}[h]
		\centering
		\begin{tabular}{|c|c|c|c|c|c|c|c|c|c|c|}
			\hline
			Type & I                            & II                           & III                          & IV                           & V                            & VI                           & VII                          & VIII                         & IX                           & X                            \\ \hline
			DL & RR & QR & CR & CC2 & RC & QC & CC1 & QQ & RQ & CQ \\  
			$\mathcal{F}_{\C}(\chi)$ & $1$                          & $1$                          & $1$                          & $0$                          & $0$                          & $0$                          & $0$                          & $-1$                         & $-1$                         & $-1$                         \\ 
			$\mathcal{F}_\bold{R}(\chi)$      & $1$                          & $-1$                         & $0$                          & $0$                          & $1$                          & $-1$                         & $0$                          & $-1$                         & $1$                          & $0$                          \\ 
			Colour 			             & \DLI & \DLII & \DLIII & \DLIV & \DLV & \DLVI & \DLVII & \DLVIII & \DLIX & \DLX \\ \hline
		\end{tabular}
		\caption{The Frobenius-Schur indicator values for and irreducible representation of $G\lhd \widehat{G}$. We attach a colouring to the labels for decorating the McKay graphs in a future section.}
	\end{table}
	
	\renewcommand{\arraystretch}{1}
\end{cor}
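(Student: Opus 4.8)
The plan is to treat the statement in two stages: first showing that the ordered pair $\left(\mathcal{F}_{\C}(\chi),\mathcal{F}_\bold{R}(\chi)\right)$ separates eight of the ten types, and then describing how the residual ambiguity between types IV and VII is broken. For the first stage I would read both coordinates off the two-letter Dyson label $\mathrm{XY}$ attached to each type. The second letter records the Wedderburn type of $V$ as an ordinary $\C G$-module, so by the classical Frobenius--Schur trichotomy (Definition \ref{defn:FSindicators}) it is exactly the value of $\mathcal{F}_{\C}(\chi)$, which equals $1$, $0$, $-1$ according as that letter is $\mathrm{R}$, $\mathrm{C}$, $\mathrm{Q}$. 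The first letter records the type of the associated simple $\C\ast\widehat{G}$-module $W$, and the Bargmann--Frobenius--Schur Condition (the preceding theorem) shows this letter is likewise recovered by $\mathcal{F}_\bold{R}(\chi)$ through the same $1/0/-1$ dictionary. Hence the pair of indicators recovers precisely the code $\mathrm{XY}$.

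I would then organise the ten types into three blocks according to $\mathcal{F}_{\C}(\chi)\in\{1,0,-1\}$ and check injectivity block by block. In the block $\mathcal{F}_{\C}=1$ (types I, II, III) and the block $\mathcal{F}_{\C}=-1$ (types VIII, IX, X) the three values $\mathcal{F}_\bold{R}\in\{1,-1,0\}$ are pairwise distinct, so each type is pinned down. In the middle block $\mathcal{F}_{\C}=0$ the four types IV, V, VI, VII carry $\mathcal{F}_\bold{R}$-values $0,1,-1,0$, so V and VI are separated but IV and VII collide at $(0,0)$. This is exactly the claim that eight types are distinguished and only the pair IV/VII (codes $\mathrm{CC2}$ and $\mathrm{CC1}$) remains ambiguous; the collision is forced, since both share the complex--complex code, which by construction the two indicators cannot further refine.

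For the resolution of IV versus VII, both types have $V$ of complex type, so $V\not\cong\overline{V}$, and $W$ of complex type as well. The extra datum splitting the code $\mathrm{CC}$ into $\mathrm{CC1}$ and $\mathrm{CC2}$ is the behaviour of the class $[V]$ under the conjugation action of $\widehat{G}$ on $\mathrm{Irr}(G)$: writing $V^{g}$ for the twist by an odd element $g\in\widehat{G}\setminus G$, one case has $V^{g}\cong V$ (the class is fixed) and the other has $V^{g}\not\cong V$ (the class moves), and these produce conjugation orbits of different sizes among the simple $\C G$-modules feeding into the single simple $\C\ast\widehat{G}$-module $W$. I would therefore compute this orbit directly from the conjugation action and corroborate it by counting the simple $\C G$-modules of the relevant dimension that restrict from $W$, as the statement suggests. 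The main obstacle is bookkeeping rather than conceptual: one must match the orbit-size and multiplicity data against Dyson's explicit description of the two $\mathrm{CC}$ block algebras in \citep{JTDR,Dyson} to be certain which orbit type carries the label $\mathrm{CC1}$ and which $\mathrm{CC2}$, precisely because the indicator pair gives no purchase on this last distinction.
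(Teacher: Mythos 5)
Your proposal is correct and follows essentially the same route the paper intends: the corollary is presented as an immediate consequence of the classical Frobenius--Schur trichotomy for $\mathcal{F}_{\C}(\chi)$ together with the Bargmann--Frobenius--Schur Condition for $\mathcal{F}_{\bold{R}}(\chi)$, and your block-by-block injectivity check and identification of the $\mathrm{CC1}/\mathrm{CC2}$ collision match the table exactly. Your elaboration of how the conjugation action of $\widehat{G}$ on $\mathrm{Irr}(G)$ breaks the remaining ambiguity is a faithful (and slightly more detailed) expansion of the paper's one-line remark.
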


Each of our index 2 containments gives a $C_2$ grading of groups $G\lhd \widehat{G}$. We calculate the Frobenius-Schur indicators for irreducible representations $G$ to determine the corresponding $A$-block structures. 

\subsection{The Polyhedral Groups}

\begin{eg}
	Let $G=\mathrm{BT}_{24}$ and $\widehat{G}=\mathrm{BO}_{48}$ respectively be the binary tetrahedral and binary octahedral groups. These are the lifts of $\mathrm{T}_{12} \lhd \mathrm{O}_{24} \subset \SO(3)$ the tetrahedral and octahedral groups to $\Spin(3) \simeq \SU(2)$. The character table of $G$ was shown in Table \ref{Table:BT24chartable}.

	we get the following Frobenius-Schur indicators.
	
	\renewcommand{\arraystretch}{1.15}	
	\begin{table}[!htbp]
		\centering
		\begin{tabular}{| c | c c c c c c c |} 
			
			\hline
			$\chi$& $\chi_1$ & $\chi_2$ & $\chi_3$ & $\chi_4$ & $\chi_5$ & $\chi_6$ & $\chi_7$ \\
			\hline
			$\mathcal{F}_\C (\chi)$ & 1 & 0 & 0 & -1 & 0 & 0 & 1   \\
			$\mathcal{F}_\bold{R} (\chi)$    & 1 & 1 & 1 & -1 & -1 & -1 & 1 \\
			\hline
		\end{tabular}
		\caption{The Real and complex Frobenius Schur indicators.}
		\label{Table:BT24FSindicators}
	\end{table}
	
	\renewcommand{\arraystretch}{1}
	
	This allows us to decorate our previously drawn McKay graph with colours corresponding $A$-block type for each representation, see Figure \ref{fig:DecoratedBT24}. We omit the labels but keep respective nodes in the same positions.
	\begin{figure}[!h]
		\[\begin{tikzcd}[column sep=tiny, row sep=tiny]
			\DLV & \DLVI & \DLI & \DLVI & \DLV \\
			&& \DLVIII \\
			&& \DLI
			\arrow[no head, from=1-5, to=1-4]
			\arrow[no head, from=1-2, to=1-1]
			\arrow[no head, from=1-2, to=1-3]
			\arrow[no head, from=1-3, to=1-4]
			\arrow[no head, from=1-3, to=2-3]
			\arrow[no head, from=2-3, to=3-3]
		\end{tikzcd}\]
		\caption{The McKay graph of $\mathrm{BT}_{24}$ decorated with the Dyson A-block structures of $\mathrm{T}_{12} \lhd \mathrm{O}_{24}$.} \label{fig:DecoratedBT24}
	\end{figure}
	
\end{eg}

We do the same for the other containments and tabulate our results below. The code used can be found in \citep{joncheahMAGMA}.

\vspace{1cm}
\renewcommand{\arraystretch}{1.15}
\begin{table}[!htbp]
	\centering
	\begin{tabular}{| c || c | c | c | c | c |}
		\hline
		$\mathrm{BT}_{24} \, \lhd \, \widehat{G} $ & $\mathrm{BO}_{48}$ & $\mathrm{B}_{+}\mathrm{TO}_{48}$ & $\mathrm{B}_{+}\mathrm{2T}_{48}$ & $\mathrm{B}_{-}\mathrm{TO}_{48}$ & $\mathrm{B}_{-}\mathrm{2T}_{48}$ \\
		\hline
		$\rho_1$ & I & I & I & I & I  \\
		$\rho_2$ & V & V & IV & V & IV  \\
		$\rho_3$ & V & V & IV & V & IV  \\
		$\rho_{\text{Nat}} = \rho_4$ & VIII  & VIII & VIII & IX & IX  \\
		$\rho_5$ & VI & VI & IV & V & IV  \\
		$\rho_6$ & VI & VI & IV & V & IV  \\
		$\rho_7$ & I & I & I & I & I  \\
		\hline
	\end{tabular}
	\caption{Dyson A-block structures of the index 2 containments $\mathrm{BT}_{24} \lhd \widehat{G}$.}
	\label{Table:BT24_AblockTypes}
\end{table}
\renewcommand{\arraystretch}{1}

\vspace{2cm}

\renewcommand{\arraystretch}{1.15}
\begin{table}[!htbp]
	\centering
	\begin{tabular}{| c || c | c |}
		\hline
		$\mathrm{BI}_{120} \; \lhd \widehat{G} $ & $\mathrm{B}_{+}\mathrm{2I}_{240}$ & $\mathrm{B}_{-}\mathrm{2I}_{240}$ \\
		\hline
		$\rho_1$ & I & I  \\
		$\rho_{\text{Nat}} = \rho_2$ & VIII & IX  \\
		$\rho_3$ & VIII & IX  \\
		$\rho_4$ & VIII  & IX  \\
		$\rho_5$ & I & I  \\
		$\rho_6$ & I & I  \\
		$\rho_7$ & I & I   \\
		$\rho_8$ & I & I   \\
		$\rho_8$ & VIII & IX   \\
		\hline
	\end{tabular}
	\caption{Dyson A-block structures of the index 2 containments $\mathrm{BI}_{120} \lhd \widehat{G}$.}
	\label{Table:BI120_AblockTypes}
\end{table}
\renewcommand{\arraystretch}{1}

\begin{table}
	\centering
	\begin{tabular}{|c | c|}
		\hline
		$\mathrm{BT}_{24} \, \lhd \, \mathrm{BO}_{48} $ & \begin{tikzcd}[column sep=tiny, row sep=tiny]
			\DLV & \DLVI & \DLI & \DLVI & \DLV \\
			&& \DLVIII \\
			&& \DLI
			\arrow[no head, from=1-5, to=1-4]
			\arrow[no head, from=1-2, to=1-1]
			\arrow[no head, from=1-2, to=1-3]
			\arrow[no head, from=1-3, to=1-4]
			\arrow[no head, from=1-3, to=2-3]
			\arrow[no head, from=2-3, to=3-3]
		\end{tikzcd} \\ \hline
		$\mathrm{BT}_{24} \, \lhd \, \mathrm{B_+TO}_{48} $ & \begin{tikzcd}[column sep=tiny, row sep=tiny]
			\DLV & \DLVI & \DLI & \DLVI & \DLV \\
			&& \DLVIII \\
			&& \DLI
			\arrow[no head, from=1-5, to=1-4]
			\arrow[no head, from=1-2, to=1-1]
			\arrow[no head, from=1-2, to=1-3]
			\arrow[no head, from=1-3, to=1-4]
			\arrow[no head, from=1-3, to=2-3]
			\arrow[no head, from=2-3, to=3-3]
		\end{tikzcd}  \\ \hline
		
		$\mathrm{BT}_{24} \, \lhd \, \mathrm{B_+2T}_{48} $ & \begin{tikzcd}[column sep=tiny, row sep=tiny]
			\DLIV & \DLIV & \DLI & \DLIV & \DLIV \\
			&& \DLVIII \\
			&& \DLI
			\arrow[no head, from=1-5, to=1-4]
			\arrow[no head, from=1-2, to=1-1]
			\arrow[no head, from=1-2, to=1-3]
			\arrow[no head, from=1-3, to=1-4]
			\arrow[no head, from=1-3, to=2-3]
			\arrow[no head, from=2-3, to=3-3]
		\end{tikzcd}  \\ \hline
		
		$\mathrm{BT}_{24} \, \lhd \, \mathrm{B_-TO}_{48} $ & \begin{tikzcd}[column sep=tiny, row sep=tiny]
			\DLV & \DLV & \DLI & \DLV & \DLV \\
			&& \DLIX \\
			&& \DLI
			\arrow[no head, from=1-5, to=1-4]
			\arrow[no head, from=1-2, to=1-1]
			\arrow[no head, from=1-2, to=1-3]
			\arrow[no head, from=1-3, to=1-4]
			\arrow[no head, from=1-3, to=2-3]
			\arrow[no head, from=2-3, to=3-3]
		\end{tikzcd}  \\ \hline
		
		$\mathrm{BT}_{24} \, \lhd \, \mathrm{B_-2T}_{48} $ & \begin{tikzcd}[column sep=tiny, row sep=tiny]
			\DLIV & \DLIV & \DLI & \DLIV & \DLIV \\
			&& \DLIX \\
			&& \DLI
			\arrow[no head, from=1-5, to=1-4]
			\arrow[no head, from=1-2, to=1-1]
			\arrow[no head, from=1-2, to=1-3]
			\arrow[no head, from=1-3, to=1-4]
			\arrow[no head, from=1-3, to=2-3]
			\arrow[no head, from=2-3, to=3-3]
		\end{tikzcd}  \\ \hline
	\end{tabular}
	\caption{McKay graphs of $\mathrm{BT}_{24}$ decorated with the Dyson A-block structures of the index 2 containments $\mathrm{BT}_{24} \lhd \widehat{G}$.} \label{Table:BT24_AblockMcKay}
\end{table}

\begin{table}
	\centering
	\begin{tabular}{|c | c|}
		\hline
		$\mathrm{BI}_{120} \, \lhd \, \mathrm{B_+2I}_{240} $ & \begin{tikzcd}[column sep=small, row sep=small]
			\DLI & \DLVIII & \DLI & \DLVIII & \DLI & \DLVIII & \DLI & \DLVIII \\
			&&&&& \DLI
			\arrow[no head, from=1-7, to=1-6]
			\arrow[no head, from=1-4, to=1-3]
			\arrow[no head, from=1-4, to=1-5]
			\arrow[no head, from=1-3, to=1-2]
			\arrow[no head, from=1-2, to=1-1]
			\arrow[no head, from=1-7, to=1-8]
			\arrow[no head, from=1-6, to=2-6]
			\arrow[no head, from=1-5, to=1-6]
		\end{tikzcd} \\ \hline
		$\mathrm{BI}_{120} \, \lhd \, \mathrm{B_-2I}_{240}  $ & \begin{tikzcd}[column sep=small, row sep=small]
			\DLI & \DLIX & \DLI & \DLIX & \DLI & \DLIX & \DLI & \DLIX \\
			&&&&& \DLI
			\arrow[no head, from=1-7, to=1-6]
			\arrow[no head, from=1-4, to=1-3]
			\arrow[no head, from=1-4, to=1-5]
			\arrow[no head, from=1-3, to=1-2]
			\arrow[no head, from=1-2, to=1-1]
			\arrow[no head, from=1-7, to=1-8]
			\arrow[no head, from=1-6, to=2-6]
			\arrow[no head, from=1-5, to=1-6]
		\end{tikzcd}  \\ \hline
	\end{tabular}
	\caption{Decorated McKay graphs of $\mathrm{BI}_{120}$ with the Dyson A-block structures of the index 2 containments $\mathrm{BI}_{120} \lhd \widehat{G}$.} \label{Table:BI120_AblockMcKay}
\end{table}

\pagebreak

\renewcommand{\arraystretch}{1.15}
\begin{table}[!htbp]
	\centering
	\begin{tabular}{| c || c | c |}
		\hline
		$\mathrm{BO}_{48} \; \lhd \widehat{G} $ & $\mathrm{B}_{+}\mathrm{2O}_{96}$ & $\mathrm{B}_{-}\mathrm{2O}_{96}$ \\
		\hline
		$\rho_1$ & I & I  \\
		$\rho_2$ & I & I  \\
		$\rho_3$ & I & I  \\
		$\rho_{\text{Nat}} = \rho_4$ & VIII  & IX  \\
		$\rho_5$ & VIII & IX  \\
		$\rho_6$ & I & VI  \\
		$\rho_7$ & I & I   \\
		$\rho_8$ & VIII & IX   \\
		\hline
	\end{tabular}
	\caption{Dyson A-block structures of the index 2 containments $\mathrm{BO}_{48} \lhd \widehat{G}$.}
	\label{Table:BO48_AblockTypes}
\end{table}

\renewcommand{\arraystretch}{1}

\vspace{2cm}


\begin{table}[!htbp]
	\centering
	\parbox{.4\linewidth}{
		\centering
		\begin{tabular}{|cc|}
			\hline
			\multicolumn{2}{|c|}{$\mathrm{B_{+}TO}_{48} \lhd \mathrm{B}_{+}\mathrm{2O}_{96}$} \\ \hline
			\multicolumn{1}{|c|}{$\rho_1$}                  &        I          \\
			\multicolumn{1}{|c|}{$\rho_2$}                  &        I          \\
			\multicolumn{1}{|c|}{$\rho_3$}                  &        I          \\
			\multicolumn{1}{|c|}{$\rho_4$}                  &        VIII          \\
			\multicolumn{1}{|c|}{$\rho_5$}                  &        VIII         \\
			\multicolumn{1}{|c|}{$\rho_6$}                  &        I          \\
			\multicolumn{1}{|c|}{$\rho_7$}                  &        I          \\
			\multicolumn{1}{|c|}{$\rho_8$}                  &        VIII          \\ \hline
		\end{tabular}
		\caption{The Dyson A-block structures of the containment $\mathrm{B_{+}TO}_{48} \lhd \mathrm{B}_{+}\mathrm{2O}_{96}$.	\label{Table:BO48_BmO96_AblockTypes}}
	}
	\quad
	\parbox{.4\linewidth}{
		\centering
		\begin{tabular}{|cc|}
			\hline
			\multicolumn{2}{|c|}{$\mathrm{B_{-}TO}_{48} \lhd \mathrm{B}_{-}\mathrm{2O}_{96}$} \\ \hline
			\multicolumn{1}{|c|}{$\rho_1$}                  &        I          \\
			\multicolumn{1}{|c|}{$\rho_2$}                  &        I          \\
			\multicolumn{1}{|c|}{$\rho_3$}                  &        I          \\
			\multicolumn{1}{|c|}{$\rho_4$}                  &        IV        \\
			\multicolumn{1}{|c|}{$\rho_5$}                  &        IV         \\
			\multicolumn{1}{|c|}{$\rho_6$}                  &        I          \\
			\multicolumn{1}{|c|}{$\rho_7$}                  &        I          \\
			\multicolumn{1}{|c|}{$\rho_8$}                  &        II          \\ \hline
		\end{tabular}
		\caption{The Dyson A-block structures of the containment $\mathrm{B_{-}TO}_{48} \lhd \mathrm{B}_{-}\mathrm{2O}_{96}$.	\label{Table:BmTO48_BmO96_AblockTypes}}
	}
\end{table}

\vspace{2cm}

\begin{table}[!htbp]
	\centering
	\parbox{.4\linewidth}{
		\centering
		\begin{tabular}{|cc|}
			\hline
			\multicolumn{2}{|c|}{$\mathrm{B_{+}2T}_{48} \lhd \mathrm{B}_{+}\mathrm{2O}_{96}$} \\ \hline
			\multicolumn{1}{|c|}{$\rho_1, \rho_2$}          &        I          \\
			\multicolumn{1}{|c|}{$\rho_3, \rho_4$}          &        V          \\
			\multicolumn{1}{|c|}{$\rho_5, \rho_6$}          &        V          \\
			\multicolumn{1}{|c|}{$\rho_7, \rho_8$}          &        VIII          \\
			\multicolumn{1}{|c|}{$\rho_9, \rho_{10}$}          &        VI         \\
			\multicolumn{1}{|c|}{$\rho_{11}, \rho_{12}$}          &        VI          \\
			\multicolumn{1}{|c|}{$\rho_{13}, \rho_{14}$}          &        I          \\ \hline
		\end{tabular}
		\caption{The Dyson A-block structures of the containment $\mathrm{B_{+}2T}_{24} \lhd \mathrm{B}_{-}\mathrm{2O}_{96}$.	\label{Table:Bp2T48_AblockTypes}}
	}
	\quad
	\parbox{.4\linewidth}{
		\centering
		\begin{tabular}{|cc|}
			\hline
			\multicolumn{2}{|c|}{$\mathrm{B_{-}2T}_{48} \lhd \mathrm{B}_{-}\mathrm{2O}_{96}$} \\ \hline
			\multicolumn{1}{|c|}{$\rho_1, \rho_2$}          &        I          \\
			\multicolumn{1}{|c|}{$\rho_3, \rho_4$}          &        V          \\
			\multicolumn{1}{|c|}{$\rho_5, \rho_6$}          &        V          \\
			\multicolumn{1}{|c|}{$\rho_7, \rho_8$}          &        IV          \\
			\multicolumn{1}{|c|}{$\rho_9, \rho_{10}$}          &       VII         \\
			\multicolumn{1}{|c|}{$\rho_{11}, \rho_{12}$}          &        VII          \\
			\multicolumn{1}{|c|}{$\rho_{13}, \rho_{14}$}          &        I          \\ \hline
		\end{tabular}
		\caption{The Dyson A-block structures of the containment $\mathrm{B_{-}2T}_{48} \lhd \mathrm{B}_{-}\mathrm{2O}_{96}$.	\label{Table:Bm2T48_AblockTypes}}
	}
\end{table}

\pagebreak

%
%
%

As subgroups of $\Orth(3)$, the tetra-octahedral group is isomorphic to the octahedral group. However, one can show via explicit calculations that $\mathrm{B_+ TO}_{48} \cong \mathrm{BO}_{48} \not \cong \mathrm{B_- TO}_{48}$. The conjugacy classes of the latter are of different orders and sizes to the two former groups. Despite this, working explicitly with the character tables gives us isomorphic McKay graphs; all three graphs are the extended Dynkin diagram $\widetilde{E_8}$.
By contrast, the case of the containments $\mathrm{B_\pm 2T}_{48}\lhd \mathrm{B_\pm 2O}_{96}$ is interesting, because the McKay graphs don't fall into the same families as we had seen before. The $\Pin_+(3)$ case has two disjoint copies of the McKay graph, whereas the $\Pin_-(3)$ case has two copies of the McKay graph, with edges in an ``alternating'' configuration. We give the character table of $\mathrm{B_- 2T}_{48}$ in Table  \ref{Table:Bm2T48chartable} below , and refer to the files \verb!Pin3--BmT48 McKayGraph.txt! , \verb!Pin3--BpT48 McKayGraph.txt! \citep{joncheahMAGMA} for the verification of the McKay graphs shown in Figures \ref{fig:McKayGraphBmT48} and \ref{fig:McKayGraphBpT48}.

\vspace{2cm}

\begin{table}[!htbp]
	\centering
	\setlength\tabcolsep{3.5pt}
	\begin{tabular}{|c | c c c c c c c c c c c c c c|}
		\hline
		Class	 & 1 & 2 & 3 & 4 & 5 & 6 & 7 & 8 & 9 & 10 & 11 & 12 & 13 & 14 \\
		Size	 & 1 & 1 & 6 & 4 & 4 & 1 & 1 & 6 & 4 & 4 & 4 & 4 & 4 & 4 \\
		Order	 & 1 & 2 & 2 & 3 & 3 & 4 & 4 & 4 & 6 & 6 & 12 & 12 & 12 & 12\\
		\hline
		$\chi_1$  & 1 & 1 & 1  & 1   & 1   & 1  & 1  & 1 & 1 & 1 & 1 & 1 & 1 & 1 \\
		$\chi_2$  & 1 & 1 & $-1$ &  1  &   1 & $-1$ & $-1$ & 1 &   1 &   1  & $-1$ &  $-1$  & $-1$  & $-1$ \\
		$\chi_3$  & 1 & 1 & $-1$ &$\omega^2$ &   $\omega$ & $-1$ & $-1$ &  1& $\omega^2$&    $\omega$&   $-\omega$&  $-\omega^2$&   $-\omega$&  $-\omega^2$ \\
		$\chi_4$  & 1 & 1 & 1  &$\omega^2$ &   $\omega$ &  1 &  1 & 1 &$\omega^2$ &   $\omega$ &    $\omega$& $\omega^2$&    $\omega$& $\omega^2$ \\
		$\chi_5$  & 1 & 1 &$-1$  &  $\omega$  &$\omega^2$ & $-1$ & $-1$ & 1 &   $\omega$ &$\omega^2$ & $-\omega^2$ &  $-\omega$ & $-\omega^2$ &  $-\omega$ \\
		$\chi_6$  & 1 & 1 & 1  &  $\omega$  &$\omega^2$ &  1 &  1 & 1 &   $\omega$ &$\omega^2$ &$\omega^2$ &   $\omega$ &$\omega^2$ &   $\omega$ \\
		$\chi_7$  & 2 &$-2$& 0  & $-1$  & $-1$  &$2i$ &$-2i$&  0&    1&    1&    $i$&    $i$&   $-i$&   $-i$ \\
		$\chi_8$  & 2 &$-2$& 0  & $-1$  & $-1$  &$-2i$& $2i$&  0&    1&    1&   $-i$&   $-i$&    $i$&    $i$ \\
		$\chi_9$  & 2 &$-2$& 0  & $-\omega$  & $-\omega^2$ &$-2i$& $2i$&  0&    $\omega$&$\omega^2$ &  $\zeta$ & $\zeta^5$&  $-\zeta$& $-\zeta^5$ \\
		$\chi_{10}$ & 2 &$-2$& 0  & $-\omega$  & $-\omega^2$ &$2i$& $-2i$&  0&    $\omega$&$\omega^2$ & $-\zeta$ &$-\zeta^5$&   $\zeta$&  $\zeta^5$ \\
		$\chi_{11}$ & 2 &$-2$& 0  & $-\omega^2$  & $-\omega$ &$-2i$& $2i$&  0& $\omega^2$&   $\omega$ & $\zeta^5$&   $\zeta$&$-\zeta^5$&  $-\zeta$ \\
		$\chi_{12}$ & 2 &$-2$& 0  & $-\omega^2$  & $-\omega$ &$2i$& $-2i$&  0& $\omega^2$&   $\omega$ &$-\zeta^5$&  $-\zeta$& $\zeta^5$&   $\zeta$ \\
		$\chi_{13}$ & 3 & 3 & 1  &  0  &   0 &  $-3$& $-3$ & $-1$&    0&    0&    0&    0&    0&   0  \\
		$\chi_{14}$ & 3 & 3 & $-1$ &  0  &   0 &   3&  3 & $-1$&    0&    0&    0&    0&    0&   0  \\
		\hline
	\end{tabular}
	\caption{Character table of the binary diplo-tetrahedral group $\mathrm{B}_{-}\mathrm{T}_{48}$. Here $\omega,i$ are a primitive third and fourth roots of unity respectively, and $\zeta$ is the twelfth root of unity satisfying $\zeta = i \omega + i$.}
	\label{Table:Bm2T48chartable}
\end{table}


\begin{figure}[!hbtp]
	\centering
	\def\svgwidth{\columnwidth}
	\[\begin{tikzcd}[column sep=small, row sep=small]
		& 6 && 9 && 13 && 11 && 4 \\
		5 && 10 && 14 && 12 && 3 \\
		&&&&& 8 \\
		&&&& 7 \\
		&&&&& 1 \\
		&&&& 2
		\arrow[from=1-2, to=2-3]
		\arrow[from=2-3, to=1-6]
		\arrow[from=1-6, to=1-8]
		\arrow[from=1-8, to=1-10]
		\arrow[from=1-4, to=1-2]
		\arrow[from=2-3, to=2-1]
		\arrow[from=2-1, to=1-4]
		\arrow[from=1-4, to=2-5]
		\arrow[from=2-5, to=2-3]
		\arrow[from=1-6, to=1-4]
		\arrow[from=1-6, to=3-6]
		\arrow[from=2-5, to=4-5]
		\arrow[from=3-6, to=5-6]
		\arrow[from=4-5, to=6-5]
		\arrow[from=2-7, to=2-9]
		\arrow[from=2-9, to=1-8]
		\arrow[from=1-10, to=2-7]
		\arrow[from=3-6, to=2-5]
		\arrow[from=4-5, to=1-6]
		\arrow[from=6-5, to=3-6]
		\arrow[from=5-6, to=4-5]
		\arrow[from=2-5, to=2-7, crossing over]
		\arrow[from=1-8, to=2-5, crossing over]
		\arrow[from=2-7, to=1-6]
	\end{tikzcd}\]
	\caption{The McKay graph of $\mathrm{B}_{-}\mathrm{2T}_{48}$ with $\rho_{\text{Nat}}=\rho_7$.}
	\label{fig:McKayGraphBmT48}
\end{figure}

\begin{figure}[!hbtp]
	\centering
	\def\svgwidth{\columnwidth}
	\[\begin{tikzcd}[column sep=small, row sep=small]
		& 6 && 11 && {13} && 12 && 5 \\
		4 && 9 && 14 && 10 && 3 \\
		&&&&& 8 \\
		&&&& 7 \\
		&&&&& 2 \\
		&&&& 1
		\arrow[no head, from=6-5, to=4-5]
		\arrow[no head, from=4-5, to=2-5]
		\arrow[no head, from=2-5, to=2-3]
		\arrow[no head, from=2-3, to=2-1]
		\arrow[no head, from=2-7, to=2-9]
		\arrow[no head, from=5-6, to=3-6]
		\arrow[no head, from=3-6, to=1-6]
		\arrow[no head, from=1-6, to=1-4]
		\arrow[no head, from=1-6, to=1-8]
		\arrow[no head, from=1-8, to=1-10]
		\arrow[no head, from=1-4, to=1-2]
		\arrow[no head, from=2-5, to=2-7, crossing over]
	\end{tikzcd}\]
	\caption{The McKay graph of $\mathrm{B}_{+}\mathrm{2T}_{48}$ with $\rho_{\text{Nat}}=\rho_7$.}
	\label{fig:McKayGraphBpT48}
\end{figure}

\begin{table}[!htbp]
	\centering
	\begin{tabular}{|c | c |}
		\hline
		$\mathrm{BO}_{48} \, \lhd \, \mathrm{B_+{2}O}_{96} $ & \begin{tikzcd}[column sep=small, row sep=small]
			\DLI & \DLVIII & \DLI & \DLVIII & \DLI & \DLVIII & \DLI \\
			&&& \DLI
			\arrow[no head, from=1-5, to=1-4]
			\arrow[no head, from=1-5, to=1-6]
			\arrow[no head, from=1-6, to=1-7]
			\arrow[no head, from=1-4, to=1-3]
			\arrow[no head, from=1-3, to=1-2]
			\arrow[no head, from=1-4, to=2-4]
			\arrow[no head, from=1-2, to=1-1]
		\end{tikzcd} \\ \hline
		$\mathrm{B_+TO}_{48} \, \lhd \, \mathrm{B_+2O}_{96} $ & \begin{tikzcd}[column sep=small, row sep=small]
			\DLI & \DLVIII & \DLI & \DLVIII & \DLI & \DLVIII & \DLI \\
			&&& \DLI
			\arrow[no head, from=1-5, to=1-4]
			\arrow[no head, from=1-5, to=1-6]
			\arrow[no head, from=1-6, to=1-7]
			\arrow[no head, from=1-4, to=1-3]
			\arrow[no head, from=1-3, to=1-2]
			\arrow[no head, from=1-4, to=2-4]
			\arrow[no head, from=1-2, to=1-1]
		\end{tikzcd} \\ \hline

		$\mathrm{B_+2T}_{48} \, \lhd \, \mathrm{B_+2O}_{96}  $ & \begin{tikzcd}[column sep=tiny, row sep=tiny]
			& \DLV && \DLVI && \DLI && \DLVI && \DLV \\
			\DLV && \DLVI && \DLI && \DLVI && \DLV \\
			&&&&& \DLVIII \\
			&&&& \DLVIII \\
			&&&&& \DLI \\
			&&&& \DLI
			\arrow[no head, from=6-5, to=4-5]
			\arrow[no head, from=4-5, to=2-5]
			\arrow[no head, from=2-5, to=2-3]
			\arrow[no head, from=2-3, to=2-1]
			\arrow[no head, from=2-7, to=2-9]
			\arrow[no head, from=5-6, to=3-6]
			\arrow[no head, from=3-6, to=1-6]
			\arrow[no head, from=1-6, to=1-4]
			\arrow[no head, from=1-6, to=1-8]
			\arrow[no head, from=1-8, to=1-10]
			\arrow[no head, from=1-4, to=1-2]
			\arrow[no head, from=2-5, to=2-7, crossing over]
		\end{tikzcd}  \\ \hline

		$\mathrm{BO}_{48} \, \lhd \, \mathrm{B_-2O}_{96}  $ & \begin{tikzcd}[column sep=small, row sep=small]
			\DLI & \DLIX & \DLI & \DLIX & \DLI & \DLIX & \DLI \\
			&&& \DLI
			\arrow[no head, from=1-5, to=1-4]
			\arrow[no head, from=1-5, to=1-6]
			\arrow[no head, from=1-6, to=1-7]
			\arrow[no head, from=1-4, to=1-3]
			\arrow[no head, from=1-3, to=1-2]
			\arrow[no head, from=1-4, to=2-4]
			\arrow[no head, from=1-2, to=1-1]
		\end{tikzcd}  \\ \hline
		
		$\mathrm{B_-TO}_{48} \, \lhd \, \mathrm{B_-2O}_{96}  $ & \begin{tikzcd}[column sep=small, row sep=small]
			\DLI & \DLIV & \DLI & \DLII & \DLI & \DLIV & \DLI \\
			&&& \DLI
			\arrow[no head, from=1-5, to=1-4]
			\arrow[no head, from=1-5, to=1-6]
			\arrow[no head, from=1-6, to=1-7]
			\arrow[no head, from=1-4, to=1-3]
			\arrow[no head, from=1-3, to=1-2]
			\arrow[no head, from=1-4, to=2-4]
			\arrow[no head, from=1-2, to=1-1]
		\end{tikzcd}  \\ \hline
		
		$\mathrm{B_-2T}_{48} \, \lhd \, \mathrm{B_-2O}_{96}  $ & \begin{tikzcd}[column sep=tiny, row sep=tiny]
			& \DLV && \DLVII && \DLI && \DLVII && \DLV \\
			\DLV && \DLVII && \DLI && \DLVII && \DLV \\
			&&&&& \DLIV \\
			&&&& \DLIV \\
			&&&&& \DLI \\
			&&&& \DLI
			\arrow[from=1-2, to=2-3]
			\arrow[from=2-3, to=1-6]
			\arrow[from=1-6, to=1-8]
			\arrow[from=1-8, to=1-10]
			\arrow[from=1-4, to=1-2]
			\arrow[from=2-3, to=2-1]
			\arrow[from=2-1, to=1-4]
			\arrow[from=1-4, to=2-5]
			\arrow[from=2-5, to=2-3]
			\arrow[from=1-6, to=1-4]
			\arrow[from=1-6, to=3-6]
			\arrow[from=2-5, to=4-5]
			\arrow[from=3-6, to=5-6]
			\arrow[from=4-5, to=6-5]
			\arrow[from=2-7, to=2-9]
			\arrow[from=2-9, to=1-8]
			\arrow[from=1-10, to=2-7]
			\arrow[from=3-6, to=2-5]
			\arrow[from=4-5, to=1-6]
			\arrow[from=6-5, to=3-6]
			\arrow[from=5-6, to=4-5]
			\arrow[from=2-5, to=2-7, crossing over]
			\arrow[from=1-8, to=2-5, crossing over]
			\arrow[from=2-7, to=1-6]
		\end{tikzcd}  \\ \hline
		
	\end{tabular}
	\caption{McKay graphs of the index 2 containments into $\mathrm{B_\pm 2O}_{96}$ decorated with the Dyson A-block structures.} \label{Table:BO48_AblockMcKay}
\end{table}

\pagebreak
\subsection{The Axial Groups}

Much of the process here is identical to the polyhedral groups. As we are dealing with infinite families, rather than list tables Dyson labels of for each index $n$, we just present the decorated McKay graphs. As before, our code can be found in \citep{joncheahMAGMA}.

\begin{table}[!hp]
	\centering
	\begin{tabular}{|c | c|}
		\hline
		$\mathrm{C}_{n} \, \lhd \, \mathrm{BC}_{2n} $, $n$ odd &  \begin{tikzcd}[column sep=tiny, row sep=tiny]
			&&&& \DLI \\
			\DLIV & \DLIV & \DLIV &\DLIV && \DLIV& \DLIV & \DLIV & \DLIV
			\arrow[no head, from=2-2, to=2-1]
			\arrow[no head, from=2-4, to=2-6]
			\arrow[no head, from=2-8, to=2-9]
			\arrow[shift left=1, no head, from=2-1, to=1-5]
			\arrow[shift left=1, no head, from=1-5, to=2-9]
			\arrow[no head, from=2-8, to=2-7]
			\arrow[no head, from=2-3, to=2-2]
			\arrow["\cdots"{description}, no head, from=2-7, to=2-6]
			\arrow["\cdots"{description}, no head, from=2-3, to=2-4]
		\end{tikzcd}\\ 
		
		$\mathrm{C}_{n} \, \lhd \, \mathrm{BC}_{2n} $, $n$ even & \begin{tikzcd}[column sep=tiny, row sep=tiny]
			&&&& \DLI \\
			\DLIV & \DLIV & \DLIV &\DLIV & \DLII & \DLIV& \DLIV & \DLIV & \DLIV
			\arrow[no head, from=2-5, to=2-4]
			\arrow[no head, from=2-2, to=2-1]
			\arrow[no head, from=2-5, to=2-6]
			\arrow[no head, from=2-8, to=2-9]
			\arrow[shift left=1, no head, from=2-1, to=1-5]
			\arrow[shift left=1, no head, from=1-5, to=2-9]
			\arrow[no head, from=2-8, to=2-7]
			\arrow[no head, from=2-3, to=2-2]
			\arrow["\cdots"{description}, no head, from=2-7, to=2-6]
			\arrow["\cdots"{description}, no head, from=2-3, to=2-4]
		\end{tikzcd}  \\ \hline 
		
		$\mathrm{BC}_{2n} \, \lhd \, \mathrm{BD}_{4n} $, $n$ odd & \begin{tikzcd}[column sep=tiny, row sep=tiny]
			&&&& \DLI \\
			\DLVI & \DLV & \DLVI &\DLV & \DLII & \DLV& \DLVI & \DLV & \DLVI
			\arrow[no head, from=2-5, to=2-4]
			\arrow[no head, from=2-2, to=2-1]
			\arrow[no head, from=2-5, to=2-6]
			\arrow[no head, from=2-8, to=2-9]
			\arrow[shift left=1, no head, from=2-1, to=1-5]
			\arrow[shift left=1, no head, from=1-5, to=2-9]
			\arrow[no head, from=2-8, to=2-7]
			\arrow[no head, from=2-3, to=2-2]
			\arrow["\cdots"{description}, no head, from=2-7, to=2-6]
			\arrow["\cdots"{description}, no head, from=2-3, to=2-4]
		\end{tikzcd}  \\ 
		
		$\mathrm{BC}_{2n} \, \lhd \, \mathrm{BD}_{4n} $, $n$ even & \begin{tikzcd}[column sep=tiny, row sep=tiny]
			&&&& \DLI \\
			\DLVI & \DLV & \DLVI &\DLVI & \DLI & \DLVI& \DLVI & \DLV & \DLVI
			\arrow[no head, from=2-5, to=2-4]
			\arrow[no head, from=2-2, to=2-1]
			\arrow[no head, from=2-5, to=2-6]
			\arrow[no head, from=2-8, to=2-9]
			\arrow[shift left=1, no head, from=2-1, to=1-5]
			\arrow[shift left=1, no head, from=1-5, to=2-9]
			\arrow[no head, from=2-8, to=2-7]
			\arrow[no head, from=2-3, to=2-2]
			\arrow["\cdots"{description}, no head, from=2-7, to=2-6]
			\arrow["\cdots"{description}, no head, from=2-3, to=2-4]
		\end{tikzcd}  \\ \hline
		
		$\mathrm{BC}_{2n} \, \lhd \, \mathrm{B_+CD}_{4n} $ & \begin{tikzcd}[column sep=tiny, row sep=tiny]
			&&&& \DLI \\
			\DLVI & \DLV & \DLVI &\DLV & \DLII & \DLV& \DLVI & \DLV & \DLVI
			\arrow[no head, from=2-5, to=2-4]
			\arrow[no head, from=2-2, to=2-1]
			\arrow[no head, from=2-5, to=2-6]
			\arrow[no head, from=2-8, to=2-9]
			\arrow[shift left=1, no head, from=2-1, to=1-5]
			\arrow[shift left=1, no head, from=1-5, to=2-9]
			\arrow[no head, from=2-8, to=2-7]
			\arrow[no head, from=2-3, to=2-2]
			\arrow["\cdots"{description}, no head, from=2-7, to=2-6]
			\arrow["\cdots"{description}, no head, from=2-3, to=2-4]
		\end{tikzcd}  \\ \hline
		
		$\mathrm{BC}_{2n} \, \lhd \, \mathrm{B_-CD}_{4n} $ & \begin{tikzcd}[column sep=tiny, row sep=tiny]
			&&&& \DLI \\
			\DLV & \DLV & \DLV &\DLV & \DLII & \DLV& \DLV & \DLV & \DLV
			\arrow[no head, from=2-5, to=2-4]
			\arrow[no head, from=2-2, to=2-1]
			\arrow[no head, from=2-5, to=2-6]
			\arrow[no head, from=2-8, to=2-9]
			\arrow[shift left=1, no head, from=2-1, to=1-5]
			\arrow[shift left=1, no head, from=1-5, to=2-9]
			\arrow[no head, from=2-8, to=2-7]
			\arrow[no head, from=2-3, to=2-2]
			\arrow["\cdots"{description}, no head, from=2-7, to=2-6]
			\arrow["\cdots"{description}, no head, from=2-3, to=2-4]
		\end{tikzcd}  \\ \hline

		$\mathrm{BC}_{2n} \, \lhd \, \mathrm{B_+CC}_{4n} $ & \begin{tikzcd}[column sep=tiny, row sep=tiny]
			&&&& \DLI \\
			\DLIV & \DLIV & \DLIV & \DLIV & \DLII & \DLIV & \DLIV & \DLIV & \DLIV
			\arrow[no head, from=2-5, to=2-4]
			\arrow[no head, from=2-2, to=2-1]
			\arrow[no head, from=2-5, to=2-6]
			\arrow[no head, from=2-8, to=2-9]
			\arrow[shift left=1, no head, from=2-1, to=1-5]
			\arrow[shift left=1, no head, from=1-5, to=2-9]
			\arrow[no head, from=2-8, to=2-7]
			\arrow[no head, from=2-3, to=2-2]
			\arrow["\cdots"{description}, no head, from=2-7, to=2-6]
			\arrow["\cdots"{description}, no head, from=2-3, to=2-4]
		\end{tikzcd}  \\ \hline

		$\mathrm{BC}_{2n} \, \lhd \, \mathrm{B_+2C}_{4n} $ & \begin{tikzcd}[column sep=tiny, row sep=tiny]
			&&&& \DLI \\
			\DLIV & \DLIV & \DLIV & \DLIV & \DLI & \DLIV & \DLIV & \DLIV & \DLIV
			\arrow[no head, from=2-5, to=2-4]
			\arrow[no head, from=2-2, to=2-1]
			\arrow[no head, from=2-5, to=2-6]
			\arrow[no head, from=2-8, to=2-9]
			\arrow[shift left=1, no head, from=2-1, to=1-5]
			\arrow[shift left=1, no head, from=1-5, to=2-9]
			\arrow[no head, from=2-8, to=2-7]
			\arrow[no head, from=2-3, to=2-2]
			\arrow["\cdots"{description}, no head, from=2-7, to=2-6]
			\arrow["\cdots"{description}, no head, from=2-3, to=2-4]
		\end{tikzcd}  \\ \hline 
		
		$\mathrm{BC}_{2n} \, \lhd \, \mathrm{B_-CC}_{4n} $, $n$ odd & \begin{tikzcd}[column sep=tiny, row sep=tiny]
			&&&& \DLI \\
			\DLIV & \DLIV & \DLIV & \DLIV & \DLI & \DLIV & \DLIV & \DLIV & \DLIV
			\arrow[no head, from=2-5, to=2-4]
			\arrow[no head, from=2-2, to=2-1]
			\arrow[no head, from=2-5, to=2-6]
			\arrow[no head, from=2-8, to=2-9]
			\arrow[shift left=1, no head, from=2-1, to=1-5]
			\arrow[shift left=1, no head, from=1-5, to=2-9]
			\arrow[no head, from=2-8, to=2-7]
			\arrow[no head, from=2-3, to=2-2]
			\arrow["\cdots"{description}, no head, from=2-7, to=2-6]
			\arrow["\cdots"{description}, no head, from=2-3, to=2-4]
		\end{tikzcd}  \\

		$\mathrm{BC}_{2n} \, \lhd \, \mathrm{B_-CC}_{4n} $, $n$ even & \begin{tikzcd}[column sep=tiny, row sep=tiny]
			&&&& \DLI \\
			\DLIV & \DLIV & \DLIV & \DLIV & \DLII & \DLIV & \DLIV & \DLIV & \DLIV
			\arrow[no head, from=2-5, to=2-4]
			\arrow[no head, from=2-2, to=2-1]
			\arrow[no head, from=2-5, to=2-6]
			\arrow[no head, from=2-8, to=2-9]
			\arrow[shift left=1, no head, from=2-1, to=1-5]
			\arrow[shift left=1, no head, from=1-5, to=2-9]
			\arrow[no head, from=2-8, to=2-7]
			\arrow[no head, from=2-3, to=2-2]
			\arrow["\cdots"{description}, no head, from=2-7, to=2-6]
			\arrow["\cdots"{description}, no head, from=2-3, to=2-4]
		\end{tikzcd}  \\ \hline
		
		$\mathrm{BC}_{2n} \, \lhd \, \mathrm{B_-2C}_{4n} $, $n$ odd & \begin{tikzcd}[column sep=tiny, row sep=tiny]
			&&&& \DLI \\
			\DLIV & \DLIV & \DLIV & \DLIV & \DLII & \DLIV & \DLIV & \DLIV & \DLIV
			\arrow[no head, from=2-5, to=2-4]
			\arrow[no head, from=2-2, to=2-1]
			\arrow[no head, from=2-5, to=2-6]
			\arrow[no head, from=2-8, to=2-9]
			\arrow[shift left=1, no head, from=2-1, to=1-5]
			\arrow[shift left=1, no head, from=1-5, to=2-9]
			\arrow[no head, from=2-8, to=2-7]
			\arrow[no head, from=2-3, to=2-2]
			\arrow["\cdots"{description}, no head, from=2-7, to=2-6]
			\arrow["\cdots"{description}, no head, from=2-3, to=2-4]
		\end{tikzcd}  \\

		$\mathrm{BC}_{2n} \, \lhd \, \mathrm{B_-2C}_{4n} $, $n$ even & \begin{tikzcd}[column sep=tiny, row sep=tiny]
			&&&& \DLI \\
			\DLIV & \DLIV & \DLIV & \DLIV & \DLI & \DLIV & \DLIV & \DLIV & \DLIV
			\arrow[no head, from=2-5, to=2-4]
			\arrow[no head, from=2-2, to=2-1]
			\arrow[no head, from=2-5, to=2-6]
			\arrow[no head, from=2-8, to=2-9]
			\arrow[shift left=1, no head, from=2-1, to=1-5]
			\arrow[shift left=1, no head, from=1-5, to=2-9]
			\arrow[no head, from=2-8, to=2-7]
			\arrow[no head, from=2-3, to=2-2]
			\arrow["\cdots"{description}, no head, from=2-7, to=2-6]
			\arrow["\cdots"{description}, no head, from=2-3, to=2-4]
		\end{tikzcd}  \\ \hline
		
	\end{tabular}
	\caption{Decorated McKay graphs of $\mathrm{BC}_{2n}$ with the Dyson A-block structures of the index 2 containments $\mathrm{BC}_{2n} \lhd \widehat{G}$.} \label{Table:BC2nAxialAblockMcKay}
\end{table}

\begin{table}[!p]
	\centering
	\begin{tabular}{|c | c|}
		\hline
		
		$\mathrm{BD}_{4n} \, \lhd \, \mathrm{B_+DD}_{8n} $, $n$ odd & \begin{tikzcd}[column sep=tiny, row sep=tiny]
			\DLI &&&&&&&& \DLVI \\
			& \DLVIII & \DLI & \DLVIII & \cdots & \DLI & \DLVIII & \DLI \\
			\DLI &&&&&&&& \DLVI
			\arrow[no head, from=2-7, to=2-6]
			\arrow[no head, from=2-4, to=2-3]
			\arrow[no head, from=2-3, to=2-2]
			\arrow[no head, from=2-7, to=2-8]
			\arrow[no head, from=2-4, to=2-5]
			\arrow[no head, from=2-5, to=2-6]
			\arrow[no head, from=2-8, to=1-9]
			\arrow[no head, from=2-8, to=3-9]
			\arrow[no head, from=2-2, to=1-1]
			\arrow[no head, from=2-2, to=3-1]
		\end{tikzcd}  \\
		
		$\mathrm{BD}_{4n} \, \lhd \, \mathrm{B_+DD}_{8n} $, $n$ even & \begin{tikzcd}[column sep=tiny, row sep=tiny]
			\DLI &&&&&&&& \DLIII \\
			& \DLVIII & \DLI & \DLVIII & \cdots & \DLVIII & \DLI & \DLVIII \\
			\DLI &&&&&&&& \DLIII
			\arrow[no head, from=2-7, to=2-6]
			\arrow[no head, from=2-4, to=2-3]
			\arrow[no head, from=2-3, to=2-2]
			\arrow[no head, from=2-7, to=2-8]
			\arrow[no head, from=2-4, to=2-5]
			\arrow[no head, from=2-5, to=2-6]
			\arrow[no head, from=2-8, to=1-9]
			\arrow[no head, from=2-8, to=3-9]
			\arrow[no head, from=2-2, to=1-1]
			\arrow[no head, from=2-2, to=3-1]
		\end{tikzcd}  \\ \hline

		$\mathrm{B_+CD}_{4n} \, \lhd \, \mathrm{B_+DD}_{8n} $, $n$ odd & \begin{tikzcd}[column sep=tiny, row sep=tiny]
			\DLI &&&&&&&& \DLVI \\
			& \DLVIII & \DLI & \DLVIII & \cdots & \DLI & \DLVIII & \DLI \\
			\DLI &&&&&&&& \DLVI
			\arrow[no head, from=2-7, to=2-6]
			\arrow[no head, from=2-4, to=2-3]
			\arrow[no head, from=2-3, to=2-2]
			\arrow[no head, from=2-7, to=2-8]
			\arrow[no head, from=2-4, to=2-5]
			\arrow[no head, from=2-5, to=2-6]
			\arrow[no head, from=2-8, to=1-9]
			\arrow[no head, from=2-8, to=3-9]
			\arrow[no head, from=2-2, to=1-1]
			\arrow[no head, from=2-2, to=3-1]
		\end{tikzcd}  \\
		
		$\mathrm{B_+CD}_{4n} \, \lhd \, \mathrm{B_+DD}_{8n} $, $n$ even & \begin{tikzcd}[column sep=tiny, row sep=tiny]
			\DLI &&&&&&&& \DLIII \\
			& \DLVIII & \DLI & \DLVIII & \cdots & \DLVIII & \DLI & \DLVIII \\
			\DLI &&&&&&&& \DLIII
			\arrow[no head, from=2-7, to=2-6]
			\arrow[no head, from=2-4, to=2-3]
			\arrow[no head, from=2-3, to=2-2]
			\arrow[no head, from=2-7, to=2-8]
			\arrow[no head, from=2-4, to=2-5]
			\arrow[no head, from=2-5, to=2-6]
			\arrow[no head, from=2-8, to=1-9]
			\arrow[no head, from=2-8, to=3-9]
			\arrow[no head, from=2-2, to=1-1]
			\arrow[no head, from=2-2, to=3-1]
		\end{tikzcd}  \\ \hline
		$\mathrm{B_{+}CC}_{4n} \, \lhd \, \mathrm{B_{+}DD}_{8n} $ & \begin{tikzcd}[column sep=tiny, row sep=tiny]
			&&&& \DLI \\
			\DLVI & \DLV & \DLVI &\DLVI & \DLI & \DLVI& \DLVI & \DLV & \DLVI
			\arrow[no head, from=2-5, to=2-4]
			\arrow[no head, from=2-2, to=2-1]
			\arrow[no head, from=2-5, to=2-6]
			\arrow[no head, from=2-8, to=2-9]
			\arrow[shift left=1, no head, from=2-1, to=1-5]
			\arrow[shift left=1, no head, from=1-5, to=2-9]
			\arrow[no head, from=2-8, to=2-7]
			\arrow[no head, from=2-3, to=2-2]
			\arrow["\cdots"{description}, no head, from=2-7, to=2-6]
			\arrow["\cdots"{description}, no head, from=2-3, to=2-4]
		\end{tikzcd}  \\ \hline
		
		$\mathrm{BD}_{4n} \, \lhd \, \mathrm{B_-DD}_{8n} $, $n$ odd & \begin{tikzcd}[column sep=tiny, row sep=tiny]
			\DLI &&&&&&&& \DLV \\
			& \DLIX & \DLI & \DLIX & \cdots & \DLI & \DLIX & \DLI \\
			\DLI &&&&&&&& \DLV
			\arrow[no head, from=2-7, to=2-6]
			\arrow[no head, from=2-4, to=2-3]
			\arrow[no head, from=2-3, to=2-2]
			\arrow[no head, from=2-7, to=2-8]
			\arrow[no head, from=2-4, to=2-5]
			\arrow[no head, from=2-5, to=2-6]
			\arrow[no head, from=2-8, to=1-9]
			\arrow[no head, from=2-8, to=3-9]
			\arrow[no head, from=2-2, to=1-1]
			\arrow[no head, from=2-2, to=3-1]
		\end{tikzcd}  \\
		
		$\mathrm{BD}_{4n} \, \lhd \, \mathrm{B_-DD}_{8n} $, $n$ even & \begin{tikzcd}[column sep=tiny, row sep=tiny]
			\DLI &&&&&&&& \DLIII \\
			& \DLIX & \DLI & \DLIX & \cdots & \DLIX & \DLI & \DLIX \\
			\DLI &&&&&&&& \DLIII
			\arrow[no head, from=2-7, to=2-6]
			\arrow[no head, from=2-4, to=2-3]
			\arrow[no head, from=2-3, to=2-2]
			\arrow[no head, from=2-7, to=2-8]
			\arrow[no head, from=2-4, to=2-5]
			\arrow[no head, from=2-5, to=2-6]
			\arrow[no head, from=2-8, to=1-9]
			\arrow[no head, from=2-8, to=3-9]
			\arrow[no head, from=2-2, to=1-1]
			\arrow[no head, from=2-2, to=3-1]
		\end{tikzcd}  \\ \hline

		$\mathrm{B_-CD}_{4n} \, \lhd \, \mathrm{B_-DD}_{8n} $, $n$ odd & \begin{tikzcd}[column sep=tiny, row sep=tiny]
			\DLI &&&&&&&& \DLIII \\
			& \DLII & \DLI & \DLII & \cdots & \DLI & \DLII & \DLI \\
			\DLI &&&&&&&& \DLIII
			\arrow[no head, from=2-7, to=2-6]
			\arrow[no head, from=2-4, to=2-3]
			\arrow[no head, from=2-3, to=2-2]
			\arrow[no head, from=2-7, to=2-8]
			\arrow[no head, from=2-4, to=2-5]
			\arrow[no head, from=2-5, to=2-6]
			\arrow[no head, from=2-8, to=1-9]
			\arrow[no head, from=2-8, to=3-9]
			\arrow[no head, from=2-2, to=1-1]
			\arrow[no head, from=2-2, to=3-1]
		\end{tikzcd}  \\
		
		$\mathrm{B_-CD}_{4n} \, \lhd \, \mathrm{B_-DD}_{8n} $, $n$ even & \begin{tikzcd}[column sep=tiny, row sep=tiny]
			\DLI &&&&&&&& \DLIII \\
			& \DLII & \DLI & \DLII & \cdots & \DLII & \DLI & \DLII \\
			\DLI &&&&&&&& \DLIII
			\arrow[no head, from=2-7, to=2-6]
			\arrow[no head, from=2-4, to=2-3]
			\arrow[no head, from=2-3, to=2-2]
			\arrow[no head, from=2-7, to=2-8]
			\arrow[no head, from=2-4, to=2-5]
			\arrow[no head, from=2-5, to=2-6]
			\arrow[no head, from=2-8, to=1-9]
			\arrow[no head, from=2-8, to=3-9]
			\arrow[no head, from=2-2, to=1-1]
			\arrow[no head, from=2-2, to=3-1]
		\end{tikzcd}  \\ \hline

		$\mathrm{B_{-}CC}_{4n} \, \lhd \, \mathrm{B_{-}DD}_{8n} $, $n$ odd & \begin{tikzcd}[column sep=tiny, row sep=tiny]
			&&&& \DLI \\
			\DLVII & \DLV & \DLVII &\DLV & \DLIII & \DLV& \DLVII & \DLV & \DLVII
			\arrow[dashed, no head, from=2-5, to=2-4]
			\arrow[dashed, no head, from=2-2, to=2-1]
			\arrow[dashed, no head, from=2-5, to=2-6]
			\arrow[dashed, no head, from=2-8, to=2-9]
			\arrow[dashed, shift left=1, no head, from=2-1, to=1-5]
			\arrow[dashed, shift left=1, no head, from=1-5, to=2-9]
			\arrow[dashed, no head, from=2-8, to=2-7]
			\arrow[dashed, no head, from=2-3, to=2-2]
			\arrow[dashed, "\cdots"{description}, no head, from=2-7, to=2-6]
			\arrow[dashed, "\cdots"{description}, no head, from=2-3, to=2-4]
		\end{tikzcd}  \\ 
		
		$\mathrm{B_{-}CC}_{4n} \, \lhd \, \mathrm{B_{-}DD}_{8n} $, $n$ even & \begin{tikzcd}[column sep=tiny, row sep=tiny]
			&&&& \DLI \\
			\DLVII & \DLV & \DLVII &\DLVII & \DLV & \DLVII& \DLVII & \DLV & \DLVII
			\arrow[dashed, no head, from=2-5, to=2-4]
			\arrow[dashed, no head, from=2-2, to=2-1]
			\arrow[dashed, no head, from=2-5, to=2-6]
			\arrow[dashed, no head, from=2-8, to=2-9]
			\arrow[dashed, shift left=1, no head, from=2-1, to=1-5]
			\arrow[dashed, shift left=1, no head, from=1-5, to=2-9]
			\arrow[dashed, no head, from=2-8, to=2-7]
			\arrow[dashed, no head, from=2-3, to=2-2]
			\arrow[dashed, "\cdots"{description}, no head, from=2-7, to=2-6]
			\arrow[dashed, "\cdots"{description}, no head, from=2-3, to=2-4]
		\end{tikzcd}  \\ \hline
	\end{tabular}
	\caption{McKay graphs of the index 2 containments into $\mathrm{B_\pm DD}_{4n}$ decorated with the Dyson A-block structures.The dashed lines in the $\mathrm{B_-CC}_{4n} \lhd \mathrm{B_-DD}_{8n}$ containment are used to indicate that there are two components of the McKay graph with the edges between corresponding nodes in a similar configuration to what occurs in Figure \ref{fig:McKayGraphBmT48}.} \label{Table:BDD8nAxialAblockMcKay}
\end{table}

\begin{table}[!p]
	\centering
	\begin{tabular}{|c | c|}
		\hline
		
		$\mathrm{BD}_{4n} \, \lhd \, \mathrm{B_+2D}_{8n} $, $n$ odd & \begin{tikzcd}[column sep=tiny, row sep=tiny]
			\DLI &&&&&&&& \DLIV \\
			& \DLVIII & \DLI & \DLVIII & \cdots & \DLI & \DLVIII & \DLI \\
			\DLI &&&&&&&& \DLIV
			\arrow[no head, from=2-7, to=2-6]
			\arrow[no head, from=2-4, to=2-3]
			\arrow[no head, from=2-3, to=2-2]
			\arrow[no head, from=2-7, to=2-8]
			\arrow[no head, from=2-4, to=2-5]
			\arrow[no head, from=2-5, to=2-6]
			\arrow[no head, from=2-8, to=1-9]
			\arrow[no head, from=2-8, to=3-9]
			\arrow[no head, from=2-2, to=1-1]
			\arrow[no head, from=2-2, to=3-1]
		\end{tikzcd}  \\
		
		$\mathrm{BD}_{4n} \, \lhd \, \mathrm{B_+2D}_{8n} $, $n$ even & \begin{tikzcd}[column sep=tiny, row sep=tiny]
			\DLI &&&&&&&& \DLI \\
			& \DLVIII & \DLI & \DLVIII & \cdots & \DLVIII & \DLI & \DLVIII \\
			\DLI &&&&&&&& \DLI
			\arrow[no head, from=2-7, to=2-6]
			\arrow[no head, from=2-4, to=2-3]
			\arrow[no head, from=2-3, to=2-2]
			\arrow[no head, from=2-7, to=2-8]
			\arrow[no head, from=2-4, to=2-5]
			\arrow[no head, from=2-5, to=2-6]
			\arrow[no head, from=2-8, to=1-9]
			\arrow[no head, from=2-8, to=3-9]
			\arrow[no head, from=2-2, to=1-1]
			\arrow[no head, from=2-2, to=3-1]
		\end{tikzcd}  \\ \hline

		$\mathrm{B_+CD}_{4n} \, \lhd \, \mathrm{B_+2D}_{8n} $, $n$ odd & \begin{tikzcd}[column sep=tiny, row sep=tiny]
			\DLI &&&&&&&& \DLIV \\
			& \DLVIII & \DLI & \DLVIII & \cdots & \DLVIII & \DLI & \DLVIII \\
			\DLI &&&&&&&& \DLIV
			\arrow[no head, from=2-7, to=2-6]
			\arrow[no head, from=2-4, to=2-3]
			\arrow[no head, from=2-3, to=2-2]
			\arrow[no head, from=2-7, to=2-8]
			\arrow[no head, from=2-4, to=2-5]
			\arrow[no head, from=2-5, to=2-6]
			\arrow[no head, from=2-8, to=1-9]
			\arrow[no head, from=2-8, to=3-9]
			\arrow[no head, from=2-2, to=1-1]
			\arrow[no head, from=2-2, to=3-1]
		\end{tikzcd}  \\
		
		$\mathrm{B_+CD}_{4n} \, \lhd \, \mathrm{B_+2D}_{8n} $, $n$ even & \begin{tikzcd}[column sep=tiny, row sep=tiny]
			\DLI &&&&&&&& \DLI \\
			& \DLVIII & \DLI & \DLVIII & \cdots & \DLVIII & \DLI & \DLVIII \\
			\DLI &&&&&&&& \DLI
			\arrow[no head, from=2-7, to=2-6]
			\arrow[no head, from=2-4, to=2-3]
			\arrow[no head, from=2-3, to=2-2]
			\arrow[no head, from=2-7, to=2-8]
			\arrow[no head, from=2-4, to=2-5]
			\arrow[no head, from=2-5, to=2-6]
			\arrow[no head, from=2-8, to=1-9]
			\arrow[no head, from=2-8, to=3-9]
			\arrow[no head, from=2-2, to=1-1]
			\arrow[no head, from=2-2, to=3-1]
		\end{tikzcd}  \\ \hline
			$\mathrm{B_{+}2C}_{4n} \, \lhd \, \mathrm{B_{+}2D}_{8n} $, $n$ odd & \begin{tikzcd}[column sep=tiny, row sep=tiny]
			&&&& \DLI \\
			\DLVI & \DLV & \DLVI &\DLV & \DLII & \DLV& \DLVI & \DLV & \DLVI
			\arrow[Rightarrow, no head, from=2-5, to=2-4]
			\arrow[Rightarrow, no head, from=2-2, to=2-1]
			\arrow[Rightarrow, no head, from=2-5, to=2-6]
			\arrow[Rightarrow, no head, from=2-8, to=2-9]
			\arrow[Rightarrow, shift left=1, no head, from=2-1, to=1-5]
			\arrow[Rightarrow, shift left=1, no head, from=1-5, to=2-9]
			\arrow[Rightarrow, no head, from=2-8, to=2-7]
			\arrow[Rightarrow, no head, from=2-3, to=2-2]
			\arrow[Rightarrow, "\cdots"{description}, no head, from=2-7, to=2-6]
			\arrow[Rightarrow, "\cdots"{description}, no head, from=2-3, to=2-4]
		\end{tikzcd}  \\ 
		
		$\mathrm{B_{+}2C}_{4n} \, \lhd \, \mathrm{B_{+}2D}_{8n} $, $n$ even & \begin{tikzcd}[column sep=tiny, row sep=tiny]
			&&&& \DLI \\
			\DLVI & \DLV & \DLVI &\DLVI & \DLI & \DLVI& \DLVI & \DLV & \DLVI
			\arrow[Rightarrow, no head, from=2-5, to=2-4]
			\arrow[Rightarrow, no head, from=2-2, to=2-1]
			\arrow[Rightarrow, no head, from=2-5, to=2-6]
			\arrow[Rightarrow, no head, from=2-8, to=2-9]
			\arrow[Rightarrow, shift left=1, no head, from=2-1, to=1-5]
			\arrow[Rightarrow, shift left=1, no head, from=1-5, to=2-9]
			\arrow[Rightarrow, no head, from=2-8, to=2-7]
			\arrow[Rightarrow, no head, from=2-3, to=2-2]
			\arrow[Rightarrow, "\cdots"{description}, no head, from=2-7, to=2-6]
			\arrow[Rightarrow, "\cdots"{description}, no head, from=2-3, to=2-4]
		\end{tikzcd}  \\ \hline
		
		$\mathrm{BD}_{4n} \, \lhd \, \mathrm{B_-2D}_{8n} $, $n$ odd & \begin{tikzcd}[column sep=tiny, row sep=tiny]
			\DLI &&&&&&&& \DLIV \\
			& \DLIX & \DLI & \DLIX & \cdots & \DLI & \DLIX & \DLI \\
			\DLI &&&&&&&& \DLIV
			\arrow[no head, from=2-7, to=2-6]
			\arrow[no head, from=2-4, to=2-3]
			\arrow[no head, from=2-3, to=2-2]
			\arrow[no head, from=2-7, to=2-8]
			\arrow[no head, from=2-4, to=2-5]
			\arrow[no head, from=2-5, to=2-6]
			\arrow[no head, from=2-8, to=1-9]
			\arrow[no head, from=2-8, to=3-9]
			\arrow[no head, from=2-2, to=1-1]
			\arrow[no head, from=2-2, to=3-1]
		\end{tikzcd}  \\
		
		$\mathrm{BD}_{4n} \, \lhd \, \mathrm{B_-2D}_{8n} $, $n$ even & \begin{tikzcd}[column sep=tiny, row sep=tiny]
			\DLI &&&&&&&& \DLI \\
			& \DLIX & \DLI & \DLIX & \cdots & \DLIX & \DLI & \DLIX \\
			\DLI &&&&&&&& \DLI
			\arrow[no head, from=2-7, to=2-6]
			\arrow[no head, from=2-4, to=2-3]
			\arrow[no head, from=2-3, to=2-2]
			\arrow[no head, from=2-7, to=2-8]
			\arrow[no head, from=2-4, to=2-5]
			\arrow[no head, from=2-5, to=2-6]
			\arrow[no head, from=2-8, to=1-9]
			\arrow[no head, from=2-8, to=3-9]
			\arrow[no head, from=2-2, to=1-1]
			\arrow[no head, from=2-2, to=3-1]
		\end{tikzcd}  \\ \hline

		$\mathrm{B_-CD}_{4n} \, \lhd \, \mathrm{B_-2D}_{8n} $, $n$ odd & \begin{tikzcd}[column sep=tiny, row sep=tiny]
			\DLI &&&&&&&& \DLII \\
			& \DLII & \DLI & \DLII & \cdots & \DLI & \DLII & \DLI \\
			\DLI &&&&&&&& \DLII
			\arrow[no head, from=2-7, to=2-6]
			\arrow[no head, from=2-4, to=2-3]
			\arrow[no head, from=2-3, to=2-2]
			\arrow[no head, from=2-7, to=2-8]
			\arrow[no head, from=2-4, to=2-5]
			\arrow[no head, from=2-5, to=2-6]
			\arrow[no head, from=2-8, to=1-9]
			\arrow[no head, from=2-8, to=3-9]
			\arrow[no head, from=2-2, to=1-1]
			\arrow[no head, from=2-2, to=3-1]
		\end{tikzcd}  \\
		
		$\mathrm{B_-CD}_{4n} \, \lhd \, \mathrm{B_-2D}_{8n} $, $n$ even & \begin{tikzcd}[column sep=tiny, row sep=tiny]
			\DLI &&&&&&&& \DLI \\
			& \DLII & \DLI & \DLII & \cdots & \DLII & \DLI & \DLII \\
			\DLI &&&&&&&& \DLI
			\arrow[no head, from=2-7, to=2-6]
			\arrow[no head, from=2-4, to=2-3]
			\arrow[no head, from=2-3, to=2-2]
			\arrow[no head, from=2-7, to=2-8]
			\arrow[no head, from=2-4, to=2-5]
			\arrow[no head, from=2-5, to=2-6]
			\arrow[no head, from=2-8, to=1-9]
			\arrow[no head, from=2-8, to=3-9]
			\arrow[no head, from=2-2, to=1-1]
			\arrow[no head, from=2-2, to=3-1]
		\end{tikzcd}  \\ \hline

		$\mathrm{B_{-}2C}_{4n} \, \lhd \, \mathrm{B_{-}2D}_{8n} $, $n$ odd & \begin{tikzcd}[column sep=tiny, row sep=tiny]
			&&&& \DLI \\
			\DLVII & \DLV & \DLVII &\DLV & \DLIV & \DLV& \DLVII & \DLV & \DLVII
			\arrow[dashed, no head, from=2-5, to=2-4]
			\arrow[dashed, no head, from=2-2, to=2-1]
			\arrow[dashed, no head, from=2-5, to=2-6]
			\arrow[dashed, no head, from=2-8, to=2-9]
			\arrow[dashed, shift left=1, no head, from=2-1, to=1-5]
			\arrow[dashed, shift left=1, no head, from=1-5, to=2-9]
			\arrow[dashed, no head, from=2-8, to=2-7]
			\arrow[dashed, no head, from=2-3, to=2-2]
			\arrow[dashed, "\cdots"{description}, no head, from=2-7, to=2-6]
			\arrow[dashed, "\cdots"{description}, no head, from=2-3, to=2-4]
		\end{tikzcd}  \\ 
		
		$\mathrm{B_{-}2C}_{4n} \, \lhd \, \mathrm{B_{-}2D}_{8n} $, $n$ even & \begin{tikzcd}[column sep=tiny, row sep=tiny]
			&&&& \DLI \\
			\DLVII & \DLV & \DLVII &\DLVII & \DLI & \DLVII& \DLVII& \DLV & \DLVII
			\arrow[dashed, no head, from=2-5, to=2-4]
			\arrow[dashed, no head, from=2-2, to=2-1]
			\arrow[dashed, no head, from=2-5, to=2-6]
			\arrow[dashed, no head, from=2-8, to=2-9]
			\arrow[dashed, shift left=1, no head, from=2-1, to=1-5]
			\arrow[dashed, shift left=1, no head, from=1-5, to=2-9]
			\arrow[dashed, no head, from=2-8, to=2-7]
			\arrow[dashed, no head, from=2-3, to=2-2]
			\arrow[dashed, "\cdots"{description}, no head, from=2-7, to=2-6]
			\arrow[dashed, "\cdots"{description}, no head, from=2-3, to=2-4]
		\end{tikzcd}  \\ \hline
	\end{tabular}
	\caption{McKay graphs of the index 2 containments into $\mathrm{B_\pm 2D}_{4n}$ decorated with the Dyson A-block structures.The double lines in the $\mathrm{B_+2C}_{4n} \lhd \mathrm{B_+2D}_{8n}$ containment are used to indicate that there are two disjoint but corresponding components of the McKay graph similar to what occurs in Figure \ref{fig:McKayGraphBpT48}.} \label{Table:B2D4nAxialAblockMcKay}
\end{table}

\pagebreak

\section{KR-Theory}
\subsection{Topological KR-Theory}
We follow Atiyah in \citep{Atiyah66} in developing topological $KR$-theory.
\begin{defn}
	A \emph{Real space} is a topological space $X$ equipped with a $2$-periodic homeomorphism $\tau : X \to X$ (i.e. $\tau^2=\mathrm{Id}_X$). This can be viewed as a space with an action of the cyclic group $C_2$ where the non-identity element acts continuously.
\end{defn}

We are interested in applying the action of $C_2 \cong \widehat{G}/G$ onto the geometric quotient space $X / G$.

\begin{lem}
	For any $C_2$-grading $G\lhd \widehat{G}$ and $\widehat{G}$-space $X$, the orbit space $X / G$ admits an action of $\widehat{G}$ with kernel $G$.
\end{lem}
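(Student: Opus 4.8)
The plan is to transport the given $\widehat{G}$-action on $X$ through the quotient map $q\colon X\to X/G$. Concretely, for $\widehat{g}\in\widehat{G}$ and an orbit $Gx\in X/G$ I would set $\widehat{g}\cdot(Gx):=G(\widehat{g}\cdot x)$, and the first task is to check that this is well defined. If $Gx=Gx'$, then $x'=gx$ for some $g\in G$, so $\widehat{g}x'=(\widehat{g}g\widehat{g}^{-1})\widehat{g}x$; since $G\lhd\widehat{G}$ the element $\widehat{g}g\widehat{g}^{-1}$ lies in $G$, hence $\widehat{g}x'$ and $\widehat{g}x$ share a $G$-orbit and $G(\widehat{g}x')=G(\widehat{g}x)$. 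Thus normality of $G$ is exactly what makes the formula independent of the chosen representative, and it is the conceptual heart of the construction.

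Second, I would verify the action axioms and continuity. The identity acts trivially and the relation $(\widehat{g}_1\widehat{g}_2)\cdot(Gx)=\widehat{g}_1\cdot(\widehat{g}_2\cdot(Gx))$ both follow immediately from the defining formula. For continuity, note that for each fixed $\widehat{g}$ the composite $x\mapsto q(\widehat{g}\cdot x)$ is continuous and constant on $G$-orbits, so by the universal property of the quotient topology it factors through $q$ as a continuous map $X/G\to X/G$; its inverse is the map induced by $\widehat{g}^{-1}$, so each $\widehat{g}$ acts by a homeomorphism. Because $\widehat{G}$ is finite, hence discrete, this pointwise continuity upgrades automatically to joint continuity of $\widehat{G}\times X/G\to X/G$.

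Third, and this is where I expect the real work, I would identify the kernel $K$ of the resulting homomorphism $\widehat{G}\to\mathrm{Homeo}(X/G)$. The containment $G\subseteq K$ is immediate, since for $g\in G$ we have $g\cdot(Gx)=G(gx)=Gx$. For the reverse I would exploit the index-two hypothesis: as $G\subseteq K\subseteq\widehat{G}$, the correspondence theorem identifies $K/G$ with a subgroup of $\widehat{G}/G\cong C_2$, and since $C_2$ has no proper nontrivial subgroups, either $K=G$ or $K=\widehat{G}$. Hence the claim reduces to the single assertion that some, equivalently every, element of $\widehat{G}\setminus G$ acts nontrivially on $X/G$, i.e.\ that the induced involution is not the identity. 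This is the only step that uses more than formal bookkeeping, and it is the main obstacle: for a degenerate space such as a point the involution collapses and $K=\widehat{G}$, so one must invoke the geometry of the spaces actually in play, where the coset $\widehat{G}\setminus G$ acts on $X=\C^2$ (and descends to $X\git G$) by transformations not realised inside $G$, so their images genuinely move some orbit. Establishing this nontriviality, rather than the formal descent of the action, is where the substance of the lemma lies.
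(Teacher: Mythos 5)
Your construction is the same as the paper's---descend the given $\widehat{G}$-action through the quotient map $q\colon X\to X/G$---but you carry out the verifications (well-definedness via normality, the action axioms, continuity via the universal property of the quotient topology) that the paper compresses into the single assertion that ``$\widehat{G}$ acts continuously on $X/G$.'' Beyond that, the paper's proof checks only two things: that $G$ lies in the kernel, and that any two elements $g,h\in\widehat{G}\setminus G$ induce the same map on $X/G$ because $hg^{-1}\in G$. The latter is exactly what your kernel analysis gives for free: once $G\subseteq K$, elements in the same coset of $G$ act identically, and together with $g^2\in G$ this shows the action factors through $\widehat{G}/G\cong C_2$.

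The difficulty you isolate in your third step is genuine, and it is a gap in the paper's proof just as much as in yours: neither argument shows that an element of $\widehat{G}\setminus G$ acts nontrivially on $X/G$, and as your one-point example shows, the kernel can be all of $\widehat{G}$ for a general $\widehat{G}$-space, so the statement as literally phrased does not follow from the stated hypotheses. What both arguments actually establish is the weaker (and, for the paper's purposes, sufficient) conclusion that the $\widehat{G}$-action descends to a $C_2$-action on $X/G$, equipping it with the structure of a Real space; a trivial involution is still an involution. So you should not feel obliged to supply the missing nontriviality argument in this generality---the honest repair is either to weaken the conclusion to ``admits an action of $\widehat{G}$ factoring through $\widehat{G}/G$,'' or to add a hypothesis (e.g.\ $X=\C^2$ with $\widehat{G}$ acting faithfully) under which one exhibits a $G$-orbit that is moved. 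Your proposal is correct up to exactly the point where the paper's own proof also stops.
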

\begin{proof}
	We have that $\widehat{G}$ acts continuously on $X / G$. By the construction of the orbit space,  $g\in G$ then $g$ acts as the identity. If $g \in \widehat{G}\setminus G$, $g^2 \in G$ so the action of $g$ is $2$-periodic. Furthermore, any other $h\in \widehat{G}\setminus G$ acts the same way as $hg^-1 \in G$ so
	\begin{equation*}
		g(x) = \mathbbm{1} \cdot g (x) = hg^{-1}\cdot g (x) = h(x),
	\end{equation*}
	for any $x\in X$.
\end{proof}


\begin{defn} \label{Def:RealVB}
	Let $(X,\tau)$ be a Real space and its involution. A \emph{Real vector bundle} is a Real space $E$ which is a complex vector bundle over $X$ such that
	\begin{itemize}
		\item the projection map $p: E \to X$ commutes with the involutions on $E$ and $X$;
		\item the map $E_x \to E_{\tau(x)}$ is antilinear, i.e. the diagram
		\[\begin{tikzcd}
			{\C \times E_x} & {E_x} \\
			{\C \times E_{\tau(x)}} & {E_{\tau(x)}}
			\arrow[from=1-1, to=1-2]
			\arrow[from=1-1, to=2-1]
			\arrow[from=1-2, to=2-2]
			\arrow[from=2-1, to=2-2]
		\end{tikzcd}\]
		commutes. Here, the vertical arrows are given by the involution $\tau$ and $\C$ is endowed with its standard Real structure $\tau(z)=\bar{z}$.
	\end{itemize}
\end{defn}
Atiyah constructs the Grothendieck group of the category of Real vector bundles over a Real space $X$ and denotes this by $KR(X)$.

Atiyah and Segal similarly construct equivariant $KR$-theory in \citep{AtiyahSegal} by considering the Grothendieck group of Real $G$-equivariant vector bundles. As usual, it becomes a ring under the tensor-product. Similarly to the non-Real case, there is a relationship between the $G$-equivariant $KR$-theory and the representation theory of $G$.

\begin{defn}
	The \emph{Real representation ring} $R_\bold{R}(G)$ of a group $G$ is the Grothendieck group of the monoid of Real representations of $G$. Concretely, it is comprised of formal differences of Real representations with addition given by direct sum, and is made into a ring with multiplication given by tensor product.
\end{defn}

Atiyah and Segal note the following result in \citep{AtiyahSegal} which is analogous to Proposition~\ref{prop:KGRG}.
\begin{prop}
	\begin{equation*}
		KR_G(\mathrm{pt})\simeq KR_G(\C^2)\simeq R_\bold{R}(G)
	\end{equation*}
\end{prop}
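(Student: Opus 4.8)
The plan is to follow the same two-step strategy used for Proposition~\ref{prop:KGRG}, replacing complex $G$-vector bundles by Real $G$-equivariant vector bundles throughout. Concretely, I would establish the two isomorphisms
\[
KR_G(\mathrm{pt}) \simeq R_\bold{R}(G), \qquad KR_G(\mathrm{pt}) \simeq KR_G(\C^2)
\]
separately: the first by unwinding definitions, and the second by a homotopy (contraction) argument, so that composing them gives the full chain.

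First I would treat the point. Here $\mathrm{pt}$ is a Real space with the only possible (trivial) involution, so a Real $G$-equivariant vector bundle over $\mathrm{pt}$, in the equivariant form of Definition~\ref{Def:RealVB}, is nothing but a finite-dimensional complex vector space $V$ carrying a $\C$-linear $G$-action together with an antilinear involution that intertwines the $C_2 \cong \widehat{G}/G$ action with complex conjugation. By the equivalent description of a Real representation as a representation of $G$ equipped with an antilinear involution respecting the conjugation action of $\widehat{G}$ on $G$, such data is exactly a Real representation of the graded pair $G \lhd \widehat{G}$. Direct sum and tensor product match on both sides, so the monoid of isomorphism classes of Real $G$-bundles over $\mathrm{pt}$ coincides with the monoid of Real representations, and passing to Grothendieck groups gives $KR_G(\mathrm{pt}) \simeq R_\bold{R}(G)$.

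For the second isomorphism I would exhibit $\C^2$, with its linear $G$-action and the antilinear involution $\tau$ induced by the odd elements of $\widehat{G}$, as a Real $G$-equivariant deformation retract onto the origin. The retraction $H\colon \C^2 \times [0,1] \to \C^2$, $H(v,t) = tv$, is $\C$-linear in $v$ for each fixed real $t$, hence commutes with the $\C$-linear $G$-action; and because scaling by a real number commutes equally with linear and antilinear maps, $H$ also commutes with $\tau$. Thus $H$ is a homotopy through Real $G$-equivariant maps contracting $\C^2$ to the fixed point $\{0\}$. Invoking homotopy invariance of equivariant $KR$-theory as a cohomology theory (Atiyah~\citep{Atiyah66}, Atiyah--Segal~\citep{AtiyahSegal}), exactly as the complex case deferred to Thomason~\citep{Thomason}, yields $KR_G(\C^2) \simeq KR_G(\mathrm{pt})$, and composing completes the proof.

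The main obstacle is ensuring that the involution on $\C^2$ is the correct one and that it genuinely assembles with the $G$-action into a $\widehat{G}$-action by graded (linear on $G$, antilinear on $\widehat{G}\setminus G$) maps; one must check that the odd part acts antilinearly and squares back into the even linear part, so that $(\C^2,\tau)$ really is a Real $G$-space compatible with Definition~\ref{Def:RealVB}. Granting this, the contraction and homotopy-invariance steps are routine, but the homotopy-invariance input is where the substance lies: one relies on the fact that equivariant $KR$ is a genuine $C_2$-equivariant cohomology theory, the Real analogue of the homotopy-invariance result used in Proposition~\ref{prop:KGRG}.
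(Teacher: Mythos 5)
Your argument is correct and is essentially the approach the paper intends: the paper gives no proof of this proposition at all, simply citing Atiyah--Segal and remarking that it is analogous to Proposition~\ref{prop:KGRG}, whose proof has exactly your two-step structure (unwind definitions over a point, then invoke homotopy invariance for $\C^2$). Your filling-in of the details --- identifying Real $G$-bundles over a point with Real representations of the graded pair, and checking that the linear contraction of $\C^2$ commutes with both the $G$-action and the antilinear involution --- is a faithful Real analogue of that earlier proof and is consistent with the topological setting in which the proposition is stated.
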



\subsection{Algebraic KR-Theory}
Again, to work with algebraic varieties we need to generalise Atiyah's work from topology to algebraic geometry to obtain an algebraic $KR$-theory. We then apply this approach to our context of $C_2$ graded Kleinian subgroups.

\begin{defn}
	Given a $C_2$-grading $G\lhd \widehat{G}$, and $X=\C^2$ with a continuous $\widehat{G}$ action, let $\C[X]*\widehat{G}$ denote the skew group algebra (see section 5 or \citep{JTDR}). We define a \emph{Real coherent sheaf} on a ringed space $X$ to be a finitely generated $\C[X]*\widehat{G}$-module $\mathcal{F}$ with an action of $\widehat{G}$ such that:
	\begin{itemize}
		\item the action of $\widehat{G}$ commutes the projection $\mathcal{F}\to X$;
		\item any $g\in G$ acts linearly on stalks $\mathcal{F}_x\to \mathcal{F}_{g\cdot x}$ ;
		\item any $g\in \widehat{G} \setminus G$ acts antilinearly on stalks $\mathcal{F}_x\to \mathcal{F}_{g\cdot x}$.
	\end{itemize}
\end{defn}

Similar to section \ref{Sec:AlgK}, this is our generalisation of a Real vector bundle. We can once again take the Grothendieck group of Real coherent sheaves on $X$ to obtain $KR(X)$. This is made into a ring under the tensor product as usual.

When working with the resolution of the quotient $ \widetilde{X \git G}$, \emph{Real coherent sheaves} are just coherent sheaves on $ \widetilde{X \git G} \simeq \GHilb(X)$ with \'etal\'e space $E\to \widetilde{X \git G}$ together with an involution $\tau_{\widehat{G}}$ which acts in the sense of definition \ref{Def:RealVB}. That is, it acts antilinearly on fibres and commutes with the projection map.

In our context of $C_2$-gradings $G\lhd \widehat{G}$, we will be taking the $KR$-theory of the resolution of the Kleinian singularity $X \git G$. Where Atiyah \citep{Atiyah66} often suppresses the choice of involution in the notation, we have several groups $G$ which admit multiple $C_2$ gradings and hence different possible involutions. We denote such a pairing of scheme and involution as $\left(\widetilde{X \git G},\tau_{\widehat{G}} \right)$.

\pagebreak
\section{McKay Correspondence on Real spaces}

We conclude this paper by conjecturing the result we’ve been building towards, an analogue of Gonzalez-Springberg and Verdier's statement of the McKay correspondence in the context of $C_2$-gradings KR-theory and Real spaces.

\begin{conj*}[Real McKay correspondence]
	For any finite subgroup $G\subset \SU(2)$ and a $\widehat{G}\in \Pin_{\pm}(3)$ such that $G\lhd \widehat{G}$ is a $C_2$ grading, there is an isomorphism between the Real representation ring $R_\bold{R}(G,\widehat{G})$ and the $G$-equivariant $KR$-theory of $X=\C^2$. Moreover, the projection maps from the fibre product induce an isomorphism between ${KR}_G(X)$ and $KR\left(\widetilde{X \git G} \right)$.
	
	\[\begin{tikzcd}[column sep=tiny]
		&& {KR\left(X\times_{X \git G} \widetilde{X\git G}\right)} \\
		{R_\bold{R}(G,\widehat{G})} & {KR_G(X)} && {KR\left(\widetilde{X \git G},\tau_{\widehat{G}}\right)}
		\arrow["\simeq"{marking}, draw=none, from=2-1, to=2-2]
		\arrow["{p_1^*}", from=2-2, to=1-3]
		\arrow["{\mathrm{Inv}\circ{p_2}_*}", from=1-3, to=2-4]
		\arrow["\simeq", from=2-2, to=2-4]
	\end{tikzcd}\]
	
\end{conj*}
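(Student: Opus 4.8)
The plan is to prove the conjecture in two stages mirroring the two isomorphisms in the diagram, adapting the Gonzalez-Sprinberg and Verdier argument from $K$-theory to $KR$-theory throughout. For the left-hand isomorphism $R_\bold{R}(G,\widehat{G}) \simeq KR_G(X)$, I would invoke the Real analogue of Proposition~\ref{prop:KGRG} stated above, whose content is the $KR$-version of Thomason's homotopy-invariance theorem \citep{Thomason}. The essential observation is that the origin is the unique fixed point of the linear $\widehat{G}$-action on $X=\C^2$, so the scaling homotopy $(t,x)\mapsto tx$ is $\widehat{G}$-equivariant and intertwines the antilinear involution $\tau_{\widehat{G}}$; contracting $X$ onto the origin then identifies $KR_G(X)$ with $KR_G(\mathrm{pt})$, and a Real $G$-equivariant vector bundle over a point is precisely a Real representation of the graded pair, i.e. a class in $R_\bold{R}(G,\widehat{G})$.

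For the geometric isomorphism $KR_G(X) \simeq KR(\widetilde{X \git G},\tau_{\widehat{G}})$, the first task is to produce the involution $\tau_{\widehat{G}}$ on the resolution. The antilinear action of $\widehat{G}/G \cong C_2$ on $X$ descends to the GIT quotient $X \git G = \Spec(\C[X]^G)$, since the conjugation action of $\widehat{G}$ preserves the ring of invariants $\C[X]^G$; because the minimal resolution $\widetilde{X \git G}\simeq \GHilb(X)$ is unique up to unique isomorphism by Ito--Nakamura \citep{ItoNakamura99}, this involution lifts canonically to $\widetilde{X \git G}$, and I would check that it permutes the components of the exceptional divisor compatibly with the symmetry of the associated Dynkin diagram. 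This simultaneously endows the fibre product $X\times_{X \git G}\widetilde{X \git G}$ with a compatible involution, making $p_1,p_2$ into $C_2$-equivariant maps, so that $p_1^*$ and $\mathrm{Inv}\circ (p_2)_*$ are defined on $KR$-classes.

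I would then build the $KR$-analogue of the GSV isomorphism using tautological sheaves. In the classical case \citep{GSV} the classes of the tautological bundles, one per irreducible representation, form a basis of $K(\widetilde{X \git G})$ dual to the exceptional curves via the fundamental cycle \citep{MA}. The Real refinement requires tracking how the lifted involution acts on this basis: the antilinear structure must match the Dyson $A$-block decomposition of $R_\bold{R}(G,\widehat{G})$ recorded by the decorated McKay graphs of Section~5, so that the real, complex and quaternionic blocks of representations correspond respectively to fixed, swapped and quaternionically twisted collections of exceptional curves. Verifying that $p_1^*$ and $\mathrm{Inv}\circ (p_2)_*$ intertwine these two Real structures, and hence descend to mutually inverse ring isomorphisms on $KR$, is the crux of the argument.

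The hard part will be establishing this last compatibility rigorously. Two obstacles stand out. First, a fully algebraic $KR$-theory for schemes equipped with an antilinear involution is not standard, so one must develop the localisation and concentration machinery --- the Real counterpart of Thomason's theorem --- in this setting before the GSV fundamental-cycle computation can even be phrased. Second, and more substantively, one must prove that the canonical lift of $\tau_{\widehat{G}}$ to the tautological bundles reproduces exactly the Frobenius--Schur and Dyson data computed group-theoretically in Section~5; that is, that the geometry of the involution on the resolution is governed by the antilinear block structure of the representation theory. The decorated McKay graphs provide strong numerical evidence that these coincide, but upgrading that match into a \emph{natural} isomorphism of Real structures is where the genuine difficulty lies.
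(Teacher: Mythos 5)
There is a genuine gap here, but it is worth being precise about what kind: the statement you are asked to prove is stated in the paper as a \emph{conjecture}, and the paper supplies no proof of it --- it only builds the scaffolding (the involution on $X/G$, the notion of Real coherent sheaf, the decorated McKay graphs of Section~5) and then stops. Your proposal is a sensible research plan that matches that scaffolding, but it is not a proof, and you say so yourself: the two steps you label ``the crux'' and ``where the genuine difficulty lies'' are exactly the content of the conjecture, and you give no argument for either. Concretely: (i) the Real/equivariant analogue of Thomason's homotopy-invariance and localisation theorems for algebraic $KR$-theory of schemes with an antilinear involution is invoked but not established, and without it neither the identification $KR_G(\C^2)\simeq KR_G(\mathrm{pt})$ in the algebraic category nor the fundamental-cycle computation can be carried out; (ii) the claim that the canonical lift of $\tau_{\widehat{G}}$ to the tautological bundles on $\GHilb(\C^2)$ reproduces the Dyson $A$-block data of Section~5 is asserted, not derived --- and your proposed dictionary (``real, complex and quaternionic blocks correspond to fixed, swapped and quaternionically twisted exceptional curves'') is too coarse, since the paper's computations distinguish ten Dyson types, not three, and several distinct types share the same pair of Frobenius--Schur indicators.

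Two smaller points. First, your construction of the involution on $\widetilde{X\git G}$ via descent to $\Spec(\C[X]^G)$ and the universal property of the minimal resolution is the right idea and is consistent with how the paper sets up $\left(\widetilde{X\git G},\tau_{\widehat{G}}\right)$; but note that for $\widehat{G}\subset\Pin_+(3)\cong\SU(2)\times C_2$ the paper itself flags that the natural action of the extra central element on $\C[u,v]$ is not clear, so even the existence of $\tau_{\widehat{G}}$ is not settled uniformly across all the gradings in the hypothesis. Second, the left-hand isomorphism $R_{\bold{R}}(G,\widehat{G})\simeq KR_G(\mathrm{pt})$ is the one genuinely easy step (it is essentially the Atiyah--Segal observation the paper already cites), so your effort is correctly concentrated on the geometric side --- but concentration of effort is not the same as a proof, and as written the proposal establishes neither isomorphism in the diagram.
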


There might be alternative ways to formulate of this conjecture. One might consider allowing $G \subset \Pin_{-}(3) \subset  \mathrm{U}(2) \subset \GL(2,\C)$ in the hypothesis. This certainly has a natural action onto the coordinate ring $\C[u,v]$ and we can apply Ishii’s result \citep{Ishii02} that a minimal resolution of the quotient singularity $\C \git G$ exists and that $\GHilb(\C)$ is one such.

However, for $G \subset \Pin_{+}(3) = \SU(2) \times C_2$, it is not immediately clear what the natural action on the coordinate ring would be. We’d require an order 2 action which is central in the group and distinct from the action of $-\mathbbm{1}$. It might be possible to utilise a classification of finite subgroups of $\GL(3,\C)$ where we only consider block diagonal matrices so as to permit a distinct central element of order $2$, but this might just raise extra issues trying to resolve the resulting singularity.


\pagebreak

\bibliography{citelist}


%

\end{document}